\documentclass[preprint,11pt]{elsarticle}
\usepackage{hyperref}

\usepackage[right=2.5cm,left=2.5cm,bottom=2.5cm,top=2.5cm]{geometry}
\usepackage[reqno]{amsmath}
\usepackage[utf8]{inputenc}
\usepackage{amssymb}
\usepackage{amsmath}
\usepackage{adjustbox}
\usepackage{amsthm}
\usepackage{graphicx}
\usepackage{xcolor}
\usepackage{subcaption}
\usepackage{booktabs}
\usepackage{rotating}
\usepackage{multirow}
\usepackage{comment}
\usepackage{rotating}
\usepackage{url}
\usepackage{paralist}
\usepackage{cleveref}
\usepackage{parskip}
\usepackage{mathrsfs}
\usepackage{graphicx}
\usepackage{caption}
\usepackage{algorithm}
\usepackage{algpseudocode}
\usepackage{microtype}
\usepackage{natbib}
\usepackage{soul}
\usepackage{stmaryrd} %
\usepackage{enumitem} %
\usepackage{mathtools} %
\usepackage{lmodern}
\usepackage{pgf}
\usepackage{multirow}
\usepackage{siunitx}
\usepackage{tikz}
\usetikzlibrary{patterns, shapes.multipart, positioning, arrows.meta, backgrounds, fit, calc}
\usepackage{anyfontsize}
\newcommand{\tinysize}{\fontsize{6}{7}\selectfont}
\makeatletter
\newlength{\hatchdistance}
\newlength{\hatchthickness}
\pgfdeclarepatternformonly[\hatchdistance,\hatchthickness]{customNorthEast}
{\pgfqpoint{0pt}{0pt}}
{\pgfqpoint{\hatchdistance}{\hatchdistance}}
{\pgfqpoint{\hatchdistance}{\hatchdistance}}
{
  \pgfsetcolor{\tikz@pattern@color}
  \pgfsetlinewidth{\hatchthickness}
  \pgfpathmoveto{\pgfqpoint{0pt}{0pt}}
  \pgfpathlineto{\pgfqpoint{\hatchdistance}{\hatchdistance}}
  \pgfusepath{stroke}
}
\makeatother

\usepackage{blkarray}

\graphicspath{ {./images/} } 

\def \comma{,}

\newcommand{\items}{\mathcal{I}}
\newcommand{\colors}{\mathcal{C}}
\newcommand{\profit}{p}
\newcommand{\weight}{w}
\newcommand{\capacity}{b}

\newcommand{\bx}{\boldsymbol{x}}

\newcommand{\formKP}{\text{ILP}_\text{KP}}
\newcommand{\formCKP}{\text{ILP}_\text{CKP}}
\newcommand{\ckpLP}{\text{LP}_\text{CKP}}
\newcommand{\kpLP}{\text{LP}_\text{KP}} 

\newcommand{\firstDP}{{\tt DP_1}}
\newcommand{\secondDP}{{\tt DP_2}}

\newtheorem{proposition}{Proposition}
\newtheorem{lemma}{Lemma}%
\theoremstyle{definition}

\usepackage[acronym]{glossaries}
\newacronym{milp}{MILP}{Mixed Integer Linear Programming}
\newacronym{mip}{MIP}{Mixed Integer Programming}
\newacronym{ilp}{ILP}{Integer Linear Programming}

\usepackage{array}

\newcolumntype{L}[1]{>{\raggedright\arraybackslash}p{#1}}  %
\newcolumntype{R}[1]{>{\raggedleft\arraybackslash}p{#1}}   %

\usepackage{todonotes}
\renewcommand{\arraystretch}{0.8}
\allowdisplaybreaks

\journal{ }

\begin{document}

\begin{frontmatter}
\title{The colored knapsack problem: structural properties and exact algorithms}

\author{Fabio Ciccarelli\textsuperscript{a}}
\author{Alexander Helber\textsuperscript{b}} 
\author{Erik M\"uhmer\textsuperscript{b}}

\let\comma,

\affiliation{a Department of Computer, Control and Management Engineering Antonio Ruberti, Sapienza University of Rome, Rome, Italy}

\affiliation{b Chair of Operations Research, RWTH Aachen University, Aachen, Germany}

\begin{abstract}
We introduce and study a novel generalization of the classical Knapsack Problem (KP), called the \textit{Colored Knapsack Problem} (CKP). 
In this problem, the items are partitioned into classes of colors and the packed items need to be ordered such that no consecutive items are of the same color. 
We establish that the problem is weakly NP-hard and propose two exact dynamic programming algorithms with time complexities of $\mathcal{O}(bn^4)$ and $\mathcal{O}(b^2n^3)$, respectively. 
To enhance practical performance, we derive various dominance and fathoming rules for both approaches. 
From a theoretical perspective, we analyze the linear programming relaxation of the natural CKP formulation, proving that an optimal solution exists with at most two fractional items. We also show that the relaxation can be solved in $\mathcal{O}(n)$ time, matching the complexity of the classical KP. 
Finally, we establish a comprehensive benchmark of CKP instances, derived from the Colored Bin Packing Problem. 
Extensive computational experiments demonstrate that the proposed dynamic programming algorithms significantly outperform state-of-the-art MIP solvers on most of these instances.

\end{abstract}

  \begin{keyword}
    {Combinatorial Optimization \sep Knapsack Problem \sep Integer Linear Programming \sep Dynamic Programming \sep Computational Experiments}
  \end{keyword}
\end{frontmatter}

\section{Introduction}\label{sec:intro}

Given a set $\items=\{1,2,\dots,n\}$ of $n$ {\em items}, each having an associated {\em profit} $\profit_i \in \mathbb{Z}_{> 0}$ and {\em weight} $\weight_i \in \mathbb{Z}_{>0}$
($i \in \items$), and a {\em capacity} $\capacity \in \mathbb{Z}_{>0}, b < \sum_{i = 1}^n \weight_i$,
the {\em Knapsack Problem} (KP) asks to select a subset of items $S \subseteq \items$ which has maximum total profit, and a total weight not exceeding $\capacity$. The classical \textit{Integer Linear Programming} (ILP) formulation for the KP reads as follows:
\begin{subequations}\label{form:KP}
\begin{align}
(\formKP)  && \max ~~\sum_{i \in \items} \profit_i \, x_i \label{eq:kp:obj}\\
  &&\sum_{i \in \items} \weight_i \, x_i & \leq \capacity, \label{eq:kp:capacity}\\
  && x_i&\in\{0,1\}, & i \in \items, \label{eq:KP3}
\end{align}
\end{subequations}
where $x_i$ is a binary variable taking value $1$ if and only if item $i \in \items$ is selected. The objective function \eqref{eq:kp:obj} equals the total profit of the selected items, while the \emph{capacity constraint} \eqref{eq:kp:capacity} ensures that the total weight of the selected items does not exceed the knapsack capacity. This problem has been the subject of intensive research during the last decades, see, e.g., the books by \cite{KPP04li} and \cite{MT90}.

\smallskip
We study a variant of the KP in which we are also given a set of $m$ colors $\colors = \{1, 2, \dots, m\}$, and each item $i \in \items$ has a color $\kappa_i \in \colors$.
The {\em Colored Knapsack Problem} (CKP) consists of the KP with the additional condition that there must exist an ordering of the selected items such that no two consecutive items share the same color.
\citet{BorgesSM24} showed that this is always possible as long as the number of items of each color in the knapsack is less than or equal to that of all other colors together, plus one.
When each item is assigned a distinct color (i.e., there is no pair of distinct items $ i, j \in \items $ such that $ \kappa_i = \kappa_j $), the CKP reduces to the KP. Consequently, the CKP inherits the $\mathcal{NP}$-hardness of the KP; in particular, it is at least weakly $\mathcal{NP}$-hard.
Note that, while for the KP it can be assumed that each item has positive profit, in the CKP items of zero or even negative profit may be packed to respect the color condition in an optimal solution. 
We therefore allow $\boldsymbol \profit \in \mathbb{Z}^n$ for instances of the CKP.

For each color $c \in \colors$, let $\items_c := \{i \in \items \mid \kappa_i = c\}$ be the set of the items of color $c$ and $\bar{\items}_c = \items \setminus \items_c$ the set of the items of all other colors. 
Then, the following is a natural ILP formulation for the CKP, denoted as $\formCKP$:
\begin{subequations}\label{form:CKP}
\begin{align}
(\formCKP)  && \max ~~\sum_{i \in \items} \profit_i \, x_i \label{eq:ckp:obj}\\
  &&\sum_{i \in \items} \weight_i \, x_i & \leq \capacity, \label{eq:ckp:capacity}\\
  &&\sum_{i \in \items_c}x_i - \sum_{i \in \bar{\items}_c}x_i & \leq 1, & c \in \colors, \label{eq:ckp:colors}\\
  && x_i&\in\{0,1\}, & i \in \items. \label{eq:ckp:integrality}
\end{align}
\end{subequations}
The objective function \eqref{eq:ckp:obj} and Constraint \eqref{eq:ckp:capacity} are identical to the objective function and the capacity constraint of $\formKP$, respectively. Constraints \eqref{eq:ckp:colors}, which we refer to as \emph{color constraints}, are instead specific to the CKP and enforce that, for each color $c \in \colors$, the number of items of color $c$ in the knapsack cannot exceed the number of selected items of all other colors, plus one. 

We now introduce some notation and terminology that will be useful in the remainder of the paper. For any subset of items $S \subseteq \items$, let $k_c(S) := |S \cap \items_c|$ be the number of items of color $c$ in $S$. 
We call a color $c$ \emph{dominant} in $S$ if it has the maximum cardinality among all colors, i.e., $c \in \arg\max_{c' \in \colors} \{k_{c'}(S) \}$. 
Note that a set $S$ may admit multiple dominant colors.
A dominant color $c$ is also \emph{critical} for $S$ if
$
k_c(S) = |S \setminus \items_c| + 1.
$
We observe that there can be at most one critical color in any subset of items $S \subseteq \items$. Indeed, for a color $c$ to be critical, it must satisfy $k_c(S) > |S|/2$, meaning that more than half of the items in $S$ share color $c$. Clearly, this condition cannot hold for more than one color. 

The following example, illustrated in Figure \ref{fig:example1}, considers a CKP instance and compares an optimal KP solution, obtained by ignoring item colors, with an optimal CKP solution to that instance. It also shows that, even when a KP-optimal solution is infeasible for the CKP due to the violation of the color condition, there may not exist a CKP-optimal solution in which that color is critical, nor even dominant. Specifically, in the optimal KP solution (a), the subset of selected items is $S=\{1,2\}$ (total profit 23). It is evident that $S$ is not feasible for the CKP, since $k_1(S) > |S \setminus \items_1| + 1$.
On the other hand, the optimal CKP solution (b) selects items $S=\{1,3,4\}$ (total profit 19). In this case, $c=2$ is the critical color.

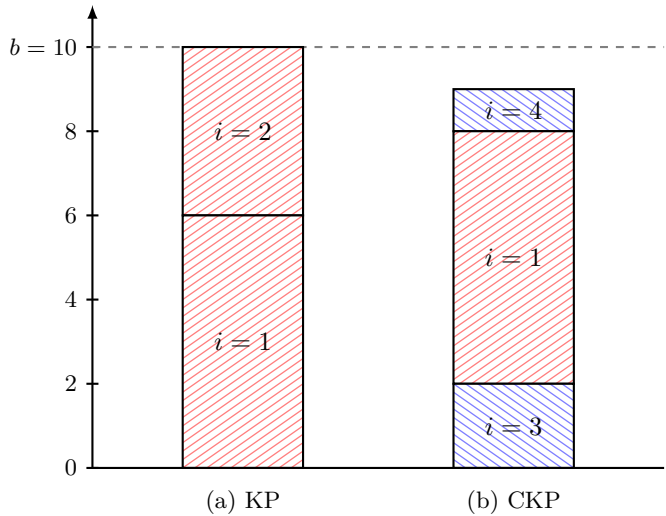
\begin{figure}[h]
    \centering
    \begin{minipage}[c]{0.40\textwidth}
        \centering
        \small 
        \vspace{-0.2cm}
        \text{Instance Data}\\[1.5ex]
        $\capacity = 10$, $\, n = 4$, $\, m = 2$\\[1em]
        \begin{tabular}{lcrrr}
            \toprule
            $i$&& $w_i$ & $p_i$ & $\kappa_i$ \\
            \midrule
            $1$ && $6$ & $15$ & $1$ \\
            $2$ && $4$ & $8$ & $1$ \\
            $3$ && $2$ & $3$  & $2$ \\
            $4$ && $1$ & $1$  & $2$ \\
            \bottomrule
        \end{tabular}
    \end{minipage}
    \begin{minipage}[c]{0.55\textwidth}
        \centering
        \begin{tikzpicture}[y=0.7cm, x=1cm, scale=0.8]

    \draw[->, thick, >=latex] (-1.5, 0) -- (-1.5, 11);
    \foreach \y in {0, 2, 4, 6, 8} {
        \draw[thick] (-1.6, \y) -- (-1.4, \y) node[left, xshift = -4] {\footnotesize $\y$};
    }

    \draw[thick] (-1.6, 10) -- (-1.4, 10) node[left, xshift = -4] {\footnotesize $b=10$};

    \draw[dashed, gray, thick] (-1.5, 10) -- (8, 10);

    \node[] at (1, -0.8) {\footnotesize (a) KP};

    \begin{scope}
        \clip (0, 6) rectangle (2,10);
        \fill[white] (0, 6) rectangle (2,10);
        \foreach \i in {-4,-3.8,...,3} {
            \draw[red!50, line width=0.5pt] (\i,6) -- (\i+4,10);
        }
    \end{scope}
    \draw[thick] (0, 6) rectangle (2,10);
    \draw (1,8) node[] {\small ${i = 2}$};

    \begin{scope}
        \clip (0, 0) rectangle (2,6);
        \fill[white] (0, 0) rectangle (2,6);
        \foreach \i in {-6,-5.8,...,3} {
            \draw[red!50, line width=0.5pt] (\i,0) -- (\i+6,6);
        }
    \end{scope}
    \draw[thick] (0, 0) rectangle (2,6);
    \draw (1, 3) node[] {\small ${i = 1}$};

    \node[] at (5.5, -0.8) {\footnotesize (b) CKP};

    \begin{scope}
        \clip (4.5, 8) rectangle (6.5,9);
        \fill[white] (4.5, 8) rectangle (6.5,9);
        \foreach \i in {3.5,3.7,...,9} {
            \draw[blue!50, line width=0.5pt] (\i,8) -- (\i-1,9);
        }
    \end{scope}
    \draw[thick] (4.5, 8) rectangle (6.5,9);
    \draw (5.5,8.5) node[] {\small ${i=4}$};

    \begin{scope}
    \clip (4.5, 2) rectangle (6.5,8);
    \fill[white] (4.5, 2) rectangle (6.5,8);
    \foreach \i in {-2,-1.8,...,13} {
        \draw[red!50, line width=0.5pt] (\i,2) -- (\i+6,8);
    }
    \end{scope}
    \draw[thick] (4.5, 2) rectangle (6.5,8);
    \draw (5.5,5) node[] {\small ${i=1}$};

    \begin{scope}
    \clip (4.5, 0) rectangle (6.5,2);
    \fill[white] (4.5, 0) rectangle (6.5,2);
    \foreach \i in {2.5,2.7,...,9} {
        \draw[blue!50, line width=0.5pt] (\i,0) -- (\i-2,2);
    }
    \end{scope}
    \draw[thick] (4.5, 0) rectangle (6.5,2);
    \draw (5.5,1) node[] {\small ${i=3}$};

    \draw[-, thick, >=latex] (-1.5, 0) -- (8, 0);

\end{tikzpicture}
    \end{minipage}%
    \hfill 
    
    \caption{Example of a CKP instance. 
    The instance parameters are detailed in the table to the left. 
    The figure to the right shows an optimal KP solution (a), obtained by ignoring the item colors, and an optimal CKP solution (b) for the instance.  
    The vertical axis shows the capacity consumption, and the height of each item corresponds to its weight. Items of color $c=1$ are highlighted in red, while those of color $c=2$ are blue.}
    \label{fig:example1}
\end{figure}

\subsection{Motivation and related problems}
\label{sec:lit}
To the best of our knowledge, the CKP has not been studied yet. 
The closest problem to it, and the one that inspired this work, is the \textit{Colored Bin Packing Problem} (CBPP).
In this problem, items of varying sizes and colors need to be packed in a minimal number of bins of a given size.
For each bin, a color condition as in the CKP is enforced, i.e., the items packed in the bin need to be ordered so that no consecutive items are of the same color.
If the CBPP is solved by branch-and-price, the subproblem is a CKP, motivating the study of this problem.
So we refer to the literature on the CBPP for applications and motivation, except for one application mentioned by~\citet{balogh2013} for which the CKP itself is also relevant:
consider the programming of content, for example deciding which videos to show in the feed of a social media app on a phone screen. 
The content may belong to different categories (educational, funny, music, advertisements, etc.) or be from different content creators and advertisement customers, and a profitable set of videos should be selected to be shown to the user while not showing two videos of the same category consecutively.

The CBPP was first proposed for the two-color case by~\citet{balogh2013}, who investigated the online variant of the problem. 
They followed up with a study of the offline version of the problem, for which they developed approximation algorithms~\citep{balogh_offline_2015}.
The problem was then generalized for the case with more than two colors by~\citet{dosa_colorful_2014}, who also studied the online variant of the problem. 
\citet{BorgesSM24} provides a summary of the results for the online variant and approximation algorithms for the offline variant.

Most relevant for our present work, \citet{BorgesSM24} also propose various exact approaches for solving the offline variant of the CBPP.
First, they introduce a compact (polynomial-size) binary programming model for the problem.
Further, they propose two arc-flow models of pseudo-polynomial size, inspired by the model for the regular Bin Packing Problem of~\citet{ValérioDeCarvalho1999629}, that they solve with a commercial solver.
In these models, a solution is a flow through a network that represents a capacity-expansion of a bin, and arcs represent packing items. 
Each unit of flow represents the packing pattern for a single bin, retrievable by path-decomposition of the solution. 
The benefit of these models is twofold:
first, they improve the quality of the linear programming relaxation bound significantly;
second, they eliminate some symmetry inherent in the compact model by not explicitly modeling the assignment of items to specific bins.
To enforce the color constraints, both models explicitly consider the ordering of the items.
This makes these types of models unpromising for the knapsack variant, since the previously mentioned characterization that simply counts the number of items in each color is much more lightweight. 
They also introduce two exponential size set partitioning models for the CBPP, that they solve with Branch-and-Price using VRPSolver \citep{PessoaSUV20}. 
The pricing problems in those approaches are Colored Knapsack Problems, at least at the root node. 
They model them as Resource-Constrained Shortest Path Problems on two different graphs. 
The first variant uses one resource per color, while the second one requires elementary paths to prevent items from being packing more than once. 
It is worth noticing that, in general, the Resource-Constrained Shortest Path Problem is strongly $\mathcal{NP}$-hard if the number of resources is part of the input (reduction from Binary Programming), or if the paths have to be elementary (reduction from Hamiltonian Path).

Apart from the bin packing variant, some of the plentiful knapsack variants are also similar to our problem.
We refer to~\cite{CACCHIANI2022105692} for an overview of previously studied knapsack variants. 
Among the variants they mention, the most similar in spirit is perhaps the well-known Knapsack Problem with Conflicts, where certain items can not be packed together \citep[see, e.g.,][for a recent exact algorithm for the problem]{ConiglioFS21}.
Nevertheless, this is a much wider-reaching constraint than the very local condition of not packing same-colored items adjacent. 
A recent work that is instead closer to ours is that of \citet{malaguti_algorithms_2025}.
They study the Knapsack Problem with Group Fairness, where the items are partitioned into groups. 
Each item is associated with some resource consumption value and total resource consumption of the items in each group needs to be within some group-specific lower and upper bound. 
In this problem it is also possible to ensure that no group of items is packed more than all others as in our problem, but the number of items for each group needs to be decided apriori, while in our problem the balance must be kept for an arbitrary number of items per color.
They propose a compact ILP model, as well as an extended set-partitioning ILP model.
In the set-partitioning model, they select one packing of items for each group of items.
They make the key observation that there is only a pseudo-polynomial number of relevant packings for each group, allowing them to completely enumerate the packings using a dynamic program and then solve the set-partitioning problem with an off-the-shelf ILP solver.
One of the dynamic programs we propose in this article is based on a similar observation that for each color only a pseudo-polynomial number of packings is relevant; nevertheless, both the dynamic program we use to obtain these packings, as well as the overarching algorithm, is different from their approach due to the differing problem structure.

\subsection{Paper outline and contributions}
The remainder of this paper is organized as follows. 
In \Cref{sec:ILP}, we investigate the theoretical properties of the \textit{Linear Programming} (LP) relaxation of $\formCKP$. We extend the classic results of the KP LP relaxation to this new variant, proving that there always exists an optimal solution to its LP relaxation with at most two fractional items. Furthermore, we analyze the relationship between the LP relaxations of the classical and colored problems, demonstrating that an optimal solution to the LP relaxation of $\formKP$ provides information to identify a tight color constraint for an optimal solution to the LP relaxation of $\formCKP$. Leveraging these insights, we prove that the linear relaxation of $\formCKP$ can be solved in linear time, i.e., $\mathcal{O}(n)$, matching the theoretical complexity of the classical Knapsack Problem.

\smallskip

\Cref{sec:DP} introduces two exact \textit{Dynamic Programming} (DP) algorithms for the CKP, establishing that the problem is weakly NP-hard. The first algorithm, detailed in \Cref{sec:firstDP}, iterates item-by-item, resulting in a time complexity of $\mathcal{O}(b \, n^4)$. The second algorithm, presented in \Cref{sec:secondDP}, adopts a decomposition-like approach that iterates color-by-color, solving an inner Knapsack Problem for each color and an outer recurrence to combine them, with a time complexity of $\mathcal{O}(b^2n^3)$. We further enhance these approaches in \Cref{sec:DPimprovements} by deriving dominance rules to discard dominated states and fathoming rules that prune states which cannot be extended to optimal or feasible solutions.

\smallskip

\Cref{sec:computationals} presents our computational experiments. We introduce a comprehensive benchmark of instances derived from the pricing subproblems of the CBPP, to cover a wide range of structural characteristics. We then evaluate the performance of both dynamic programming approaches, comparing them against the direct solution to the $\formCKP$ formulation using state-of-the-art MIP solvers, in order to demonstrate their practical efficiency.

\smallskip

Finally, \Cref{sec:conclusions} summarizes our main contributions and outlines promising directions for future research.

\section{Properties of the LP relaxation of $\formCKP$}\label{sec:ILP}
The LP relaxation $\kpLP$ of $\formKP$ has some well-known properties, for which we derive analogous properties for the LP relaxation of $\formCKP$, denoted as $\ckpLP$:
\begin{subequations}
\begin{align}
   (\ckpLP)  &  &\max  \sum_{i \in \items} \profit_i x_i, \\
     &    & \sum_{i \in \items} \weight_i x_i        & \leq \capacity, \label{eq:LPcapacity}\\
     && \sum_{i \in \items_c} x_i - \sum_{i \in \bar{\items}_c} x_i & \leq  1, & \forall c \in \colors,  \label{eq:LPcolorbalance}\\
     && x_i & \leq  1, & \forall i \in \items,  \label{eq:LPbound} \\
     && x_i & \geq   0, & \forall i \in \items.
\end{align}
\end{subequations}
Specifically, we will characterize some optimal solutions to $\ckpLP$, show that if an optimal solution to $\kpLP$ violates a color constraint, then there must be an optimal solution to $\ckpLP$ where that color constraint is tight, and use both of these properties to prove that an optimal solution to $\ckpLP$ can be found in linear time. 

For this section, we extend the concepts of dominant and critical colors introduced in \Cref{sec:intro} to solutions of $\ckpLP$. 
Given a solution $\boldsymbol{x} \in [0,1]^n$ to $\ckpLP$, we say a color $c \in \colors$ is dominant if $c \in \arg\max_{c' \in \colors}\sum_{i \in \items_{c'}}x_i$, i.e., if it has the most (fractionally) packed items in solution.
A dominant color $c$ is also critical if $\sum_{i \in \items_c}x_i - \sum_{i \in \bar{\items}_c}x_i = 1$, i.e., if its associated color constraint is tight.

It is well known (see, e.g. \cite{KPP04li}) that it is always possible to construct an optimal solution for $\kpLP$ in which at most one variable takes on a fractional value.
The corresponding item is called the \textit{split item}.
We show an analogous property for $\ckpLP$.
\begin{lemma}\label{lemma:twofracitems}
    There is always an optimal solution $\bx^*$ to $\ckpLP$ in which at most two items are fractionally packed, i.e., \( \big|\{i \mid x_i \in (0,1)\} \big| \leq 2\). 
\end{lemma}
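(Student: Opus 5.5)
The plan is to use vertex structure: $\ckpLP$ is feasible (take $\bx=0$) and bounded, so an optimal vertex $\bx^*$ exists, and I will show that every vertex has at most two fractional coordinates. Let $F=\{i \mid x^*_i\in(0,1)\}$ and $f=|F|$. At a vertex, $n$ linearly independent constraints are tight. Bounds $x_i\ge 0$ or $x_i\le 1$ can be tight only for $i\notin F$, which gives at most $n-f$ of them. So at least $f$ tight constraints must come from the capacity constraint \eqref{eq:LPcapacity} and the color constraints \eqref{eq:LPcolorbalance}. Restricting to the coordinates in $F$ (after fixing the integral coordinates), these tight non-bound constraints must have rank $f$ on $F$.

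The key step is to show that at most one color constraint can be tight at any feasible point. This mirrors the observation in \Cref{sec:intro} that at most one color is critical. If the constraint for $c$ is tight, then $\sum_{i\in\items_c}x_i=\frac{1}{2}\bigl(\sum_{i\in\items}x_i+1\bigr)$. If two colors $c\neq c'$ were both tight, adding their two equalities would give $\sum_{i\in\items_c\cup\items_{c'}}x_i=\sum_{i\in\items}x_i+1$. This is impossible, since the left-hand side is at most $\sum_{i\in\items}x_i$.

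Hence at most two non-bound constraints are tight: the capacity constraint and one color constraint. The rank condition then forces $f\le 2$, which proves the lemma.

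The only delicate point is making the counting rigorous. I will phrase it as follows. If $f\ge 3$, the tight non-bound rows restricted to $F$ have rank at most $2<f$, so there is a nonzero direction $d$ supported on $F$ with $d$ orthogonal to those rows. For small $\varepsilon$, both $\bx^*\pm\varepsilon d$ are feasible: the tight constraints remain tight, the non-tight ones keep slack, and the coordinates in $F$ stay within $(0,1)$. This contradicts $\bx^*$ being a vertex. Alternatively, without invoking vertices, one can move along $\pm d$ in whichever direction does not decrease the objective until a new bound becomes tight. Each such move strictly decreases $f$, so iterating reaches an optimal solution with $f\le 2$.
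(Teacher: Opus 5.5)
Your proof is correct and follows the paper's argument: take an optimal extreme point, note that at most one color constraint can be tight, so at most two non-bound constraints are active and at most two coordinates can be fractional; you also supply the justification of the one-tight-color-constraint claim (adding two tight equalities), which the paper only asserts, and your perturbation along $\pm d$ makes the rank count rigorous. One small caveat applies only to your optional vertex-free variant: a move along $\pm d$ may first make the capacity or a color constraint tight rather than a bound, so $f$ need not drop at every step, but the procedure still terminates because tight constraints stay tight and at most two non-bound constraints can ever be tight at once.
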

\begin{proof} 
    Consider a feasible and bounded linear program in \( \mathbb{R}^n \).
    Such a problem admits an optimal solution \( \bx^* \) that is an extreme point of the feasible solution polytope.
    Furthermore, at any extreme point in \( \mathbb{R}^n \), at least \( n \) linearly independent constraints are active.
    In $\ckpLP$, constraints consist of one capacity constraint, a family of $m$ color constraints, and bound constraints on the $\bx$ variables.
    We observe that at most one color constraint can be active at any feasible solution to $\ckpLP$: hence, at most two non-bound constraints -- namely, the capacity constraint and at most one color constraint -- can be active at an extreme point.
    It follows that at least  $n - 2$ of the remaining active constraints must be bound constraints.
    Since each bound constraint fixes one variable at either its lower or upper bound, and therefore to an integer value, at most two variables can be fractional at an extreme point solution.
\end{proof}

The two fractional items can be of any color, even the same non-critical color.
We demonstrate this with the following example: consider a CKP instance with \(n = 4\), \(\capacity = 10\), and $m=2$; items $1$ and $2$ have color $c=1$, \(\profit_1 = \profit_2 = 100,\) and \(\weight_1 = \weight_2 =4\); items $3$ and $4$ have instead color $c=2$, $\profit_3 = 2, \profit_4 = 1$, and $\weight_3 = 4, \weight_4 = 1$.
Evidently, in any optimal solution to $\ckpLP$ we have that $x_1=x_2=1$, but to fulfill the color constraint we need that $x_3+x_4 \geq 1$, so that the color constraint associated with color $1$ is not violated. 
Once $x_1$ and $x_2$ have been set to $1$, with the remaining $\capacity -\weight_1 -\weight_2 = 2$ capacity, the best feasible choice is to set $x_3 = \frac{1}{3}$ and $x_4 = \frac{2}{3}$, which fulfills both the color and capacity constraint at equality and packs as much of the third, more profitable item as possible.

\smallskip

Another interesting property of the regular KP is that an optimal solution to its LP relaxation can be found quickly by a combinatorial algorithm, specifically in $\mathcal{O}(n)$ time (see \cite{KPP04li}).
The same result can be extended to $\ckpLP$.
To show this, we need to know if there exists an optimal solution to $\ckpLP$ where a color is critical and, if yes, which it is.
Note that, if no such color exists, we would obtain a feasible solution to $\ckpLP$ simply by solving the LP-relaxation of $\formKP$.
As shown in Figure \ref{fig:example1}, if we solve an instance of the CKP without the color constraints and obtain a solution with a dominant color, it might still be that, considering the color constraints, all optimal CKP solutions have a different dominant color.
In comparison, the LP-relaxation of the problem is much better behaved: indeed, finding an optimal solution to $\kpLP$, thus disregarding the color constraints, provides information about which color is critical in at least one optimal solution to $\ckpLP$, as detailed in the following Lemma.

\begin{lemma}\label{lemma:lp2ckp}
Let \(\bx^*\) denote an optimal solution to the LP-relaxation of $\formKP$ where the color constraint is violated for some color $\tilde c \in \colors$, i.e., \(\sum_{i \in \items_{\tilde c}}x^*_i > \sum_{i \in \bar{\items}_{\tilde c}}x_i^* + 1\).
Then, there always exists an optimal solution $\bx'$ to $\ckpLP$ where $\tilde c$ is critical, i.e., \(\sum_{i \in \items_{\tilde c}}x'_i = \sum_{i \in \bar{\items}_{\tilde c}}x'_i + 1\).
\end{lemma}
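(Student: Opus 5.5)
The plan is to start from an arbitrary optimal solution of $\ckpLP$ and move it along the segment towards $\bx^*$ until the color constraint of $\tilde c$ becomes tight. Convexity of the feasible region and the fact that $\kpLP$ is a relaxation of $\ckpLP$ should make this point both feasible and optimal.

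Let $\boldsymbol y$ be any optimal solution to $\ckpLP$. Such a solution exists because the feasible region is nonempty (it contains $\boldsymbol 0$) and bounded. For $\lambda \in [0,1]$, set $\boldsymbol z(\lambda) := \lambda \bx^* + (1-\lambda)\boldsymbol y$. Define the affine function
\[
f(\lambda) := \sum_{i \in \items_{\tilde c}} z_i(\lambda) - \sum_{i \in \bar{\items}_{\tilde c}} z_i(\lambda) - 1 .
\]
By feasibility of $\boldsymbol y$ we have $f(0) \le 0$, and by hypothesis $f(1) > 0$. If $f(0)=0$, then $\tilde c$ is already critical in $\boldsymbol y$ and we are done. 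Otherwise, there is a unique $\lambda_0 \in (0,1)$ with $f(\lambda_0)=0$, and we take $\bx' := \boldsymbol z(\lambda_0)$.

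Next we check that $\bx'$ is feasible for $\ckpLP$. The bounds $0 \le x_i \le 1$ and the capacity constraint hold for both $\bx^*$ and $\boldsymbol y$, so they hold for any convex combination. The constraint for $\tilde c$ holds with equality by the choice of $\lambda_0$.

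The remaining step, which I expect to be the only point needing care, is the color constraints for colors $c \neq \tilde c$, since $\bx^*$ is not a priori required to satisfy them. I will show that $\bx^*$ satisfies them strictly. Because the constraint of $\tilde c$ is violated, we have $\sum_{i \in \items_{\tilde c}} x^*_i > \sum_{i \in \bar{\items}_{\tilde c}} x^*_i \ge \sum_{i \in \items_c} x^*_i$. Since $\items_{\tilde c} \subseteq \bar{\items}_c$, this gives
\[
\sum_{i \in \items_c} x^*_i - \sum_{i \in \bar{\items}_c} x^*_i \le \sum_{i \in \items_c} x^*_i - \sum_{i \in \items_{\tilde c}} x^*_i < 0 \le 1 .
\]
So every constraint for $c \ne \tilde c$ is satisfied by both endpoints of the segment. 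Each such constraint is linear, so it is also satisfied by $\bx'$.

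Finally, optimality. The vector $\boldsymbol y$ is feasible for $\kpLP$, because $\kpLP$ has a subset of the constraints of $\ckpLP$. Hence $\boldsymbol p^\top \bx^* \ge \boldsymbol p^\top \boldsymbol y$, and therefore $\boldsymbol p^\top \bx' = \lambda_0 \boldsymbol p^\top \bx^* + (1-\lambda_0)\boldsymbol p^\top \boldsymbol y \ge \boldsymbol p^\top \boldsymbol y$. Since $\bx'$ is feasible and $\boldsymbol y$ is optimal, $\bx'$ is optimal for $\ckpLP$, and $\tilde c$ is critical in $\bx'$. This argument does not depend on the signs of the profits.

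If one additionally wants $\bx'$ to have at most two fractional items, this is not automatic, because a convex combination of extreme points need not be extreme. In that case I would restrict $\ckpLP$ to the face where the constraint of $\tilde c$ is tight, which contains the optimal point $\bx'$, and apply the extreme-point argument of \Cref{lemma:twofracitems} to an optimal vertex of that face.
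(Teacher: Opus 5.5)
Your proof is correct and takes the same route as the paper. You take an optimal solution of $\ckpLP$, move along the segment toward $\bx^*$ until the color constraint of $\tilde c$ becomes tight, and conclude optimality from $\boldsymbol{\profit}^\top \bx^* \ge \boldsymbol{\profit}^\top \boldsymbol y$, which holds because $\kpLP$ relaxes $\ckpLP$. Your explicit check that $\bx^*$ strictly satisfies the color constraints of every $c \neq \tilde c$ is a genuine improvement: the paper only asserts that $\bx^*$ satisfies ``all other constraints'', even though $\bx^*$ is just a $\kpLP$ solution.
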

\begin{proof}
    Assume there exists an optimal solution $\bx''$ to $\ckpLP$ where $\tilde c$ is not critical.
    Consider the convex combination $\bx(\delta) =\delta \, \bx^* + (1-\delta) \, \bx''$ for $\delta \in [0,1]$. 
    We observe that the color constraint associated to $\tilde c$ is satisfied at $\delta =0$ and violated at $\delta =1$, so by continuity there exists $\overline \delta\in(0,1)$ where it holds at equality and such that $\bx(\overline \delta)$ is feasible for $\ckpLP$. Indeed, both $\bx^*$ and $\bx''$ satisfy all other constraints and so does any convex combination of them. 
    Furthermore, since $\bx''$ cannot have a greater value than $\bx^*$ (i.e., $\boldsymbol{\profit}^\top \bx'' \leq \boldsymbol{\profit}^\top \bx^*$), it holds that
    \begin{equation*}
       \boldsymbol{\profit}^\top \bx(\overline \delta) = \boldsymbol{\profit}^\top \bx'' + \delta \, (\boldsymbol{\profit}^\top \bx^* - \boldsymbol{\profit}^\top \bx'') \ge \boldsymbol{\profit}^\top \bx''.
    \end{equation*}
    This proves that $\bx(\overline \delta)$ is also optimal for $\ckpLP$.
\end{proof}

\Cref{lemma:lp2ckp} establishes that a color $\tilde c \in \colors$ that is critical in at least one optimal solution to $\ckpLP$ can be determined in $\mathcal{O}(n)$ time by solving $\kpLP$, if it exists. This allows to simplify the formulation of $\ckpLP$, discarding the color constraints associated with all colors $c \neq \tilde c$, while turning the color constraint associated to $\tilde c$ into an equality constraint. $\ckpLP$ thus reduces to the following linear program with two structural constraints:
\begin{subequations} \label{form:reducedckpLP}
\begin{align}
     &  &\max  \sum_{i \in \items} \profit_i \, x_i, \\
     &    & \sum_{i \in \items} \weight_i \, x_i        & \leq \capacity,\label{eq:reducedLPcapacity}\\
     && \sum_{i \in \items} g_i \, x_i  & = 1,  \label{eq:reducedLPcolorbalance}\\
     && x_i & \leq  1, & \forall i \in \items,  \\
     && x_i & \geq  0, & \forall i \in \items,
\end{align}
\end{subequations}
where $g_i$ is a parameter equal to $1$ if $\kappa_i = \tilde c$, and $-1$ otherwise.

\smallskip

We now state the main theoretical result of this section, showing that formulation \eqref{form:reducedckpLP} can be solved in linear time.
\begin{proposition}\label{thm:complexityckpLP}
    An optimal solution to the linear program \eqref{form:reducedckpLP} can be computed in $\mathcal{O}(n)$ time.
\end{proposition}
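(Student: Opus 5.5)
We prove the statement through Lagrangian duality on the equality constraint \eqref{eq:reducedLPcolorbalance}, followed by a median-finding search on the multiplier, in the spirit of Balas--Zemel/Megiddo linear-time algorithms for two-constraint LPs.

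\textbf{Step 1: parametric structure.} For a multiplier $\lambda \in \mathbb{R}$ on \eqref{eq:reducedLPcolorbalance}, consider the continuous knapsack
\[
L(\lambda) = \lambda + \max\Big\{ \sum_{i} (\profit_i - \lambda g_i) x_i : \sum_i \weight_i x_i \le \capacity,\ \boldsymbol{x}\in[0,1]^n \Big\}.
\]
By LP duality, $\min_\lambda L(\lambda)$ equals the optimum of \eqref{form:reducedckpLP} whenever it is feasible; feasibility can be checked in $\mathcal{O}(n)$ by minimizing the weight needed to reach $\sum_i g_i x_i = 1$. The function $L$ is convex and piecewise linear. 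For fixed $\lambda$, the inner problem is a KP relaxation with modified profits $\profit_i - \lambda g_i$, solvable in $\mathcal{O}(n)$ by the classical median-based split-item algorithm (items with nonpositive modified profit are dropped). A subgradient of $L$ at $\lambda$ is $1 - \sum_i g_i x_i(\lambda)$. This tells us in which direction the optimal $\lambda^*$ lies.

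\textbf{Step 2: prune-and-search on $\lambda$.} The optimal inner solution depends only on two things: the ordering of the ratios $(\profit_i - \lambda g_i)/\weight_i$ and their signs. Breakpoints therefore occur where two ratios cross or where a ratio becomes zero. Ratio crossings happen only between an item with $g=1$ and one with $g=-1$, since items with equal $g$ keep their relative order. Enumerating these crossings explicitly gives $\Theta(n^2)$ candidates, which is too many. Instead, I would apply Megiddo's technique for two-variable LPs. The dual is
\[
\min\ \capacity u + \lambda + \sum_i v_i \quad \text{s.t.} \quad v_i \ge \profit_i - \lambda g_i - \weight_i u,\ v_i\ge 0,\ u \ge 0.
\]
Eliminating $v_i$ gives $\min_{u\ge0,\lambda}\ \capacity u + \lambda + \sum_i \max\{0, \profit_i - \lambda g_i - \weight_i u\}$. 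This is a two-dimensional convex piecewise-linear minimization whose objective is a sum of $n$ ``hinge'' terms, which fits the Megiddo/Dyer linear-time framework for such sums.

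Concretely, the plan is as follows. Pair up the hinge lines $\profit_i - \lambda g_i - \weight_i u = 0$ and compute the median of the slopes of their intersections. Then use the $\mathcal{O}(n)$ one-dimensional oracle twice: once to decide on which side of a vertical line the optimum lies, and once for a line in the median direction. This lets us resolve a constant fraction of the pairs. For each resolved pair, one hinge is known to be fully active or fully inactive on the relevant region, so it can be merged into a linear term (or discarded), shrinking the problem by a constant factor. The resulting recurrence $T(n) = T(cn) + \mathcal{O}(n)$ with $c<1$ gives $\mathcal{O}(n)$. Once the optimal dual pair $(u^*, \lambda^*)$ is located, a primal solution is recovered in $\mathcal{O}(n)$. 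Items with strictly positive reduced profit get $x_i = 1$, items with strictly negative reduced profit get $x_i = 0$, and the at most two items tied at zero (consistent with Lemma~\ref{lemma:twofracitems}) are determined by solving a $2\times2$ linear system that makes the capacity and color constraints hold.

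\textbf{Main obstacle.} The delicate step is the pruning argument. Hinge terms are not constraints, so I must verify that after deciding the side of a median line, one member of each resolved pair can really be eliminated, either merged into the linear part or removed. This has to hold including the one-sided bound $u\ge0$ and any degenerate ties. I would first try to reduce directly to Megiddo's constrained two-variable LP by writing the hinges as constraints with $v_i$ aggregated, and then check the details of the tie handling.
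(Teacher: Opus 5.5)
Your route is the paper's route. The paper's proof only observes that the dual of \eqref{form:reducedckpLP} is a convex piecewise-linear minimization in the two multipliers and then invokes Megiddo's multidimensional search. Your hinge form $\min_{u\ge 0,\lambda}\, bu+\lambda+\sum_i\max\{0,p_i-\lambda g_i-w_iu\}$, with paired-median pruning, a linear-time line oracle and $T(n)=T(cn)+\mathcal{O}(n)$, is that argument written out. The obstacle you flag is the standard pruning lemma of the technique. Suppose the search has shown that a minimizer $y^*$ lies in an open region $Q$ not met by the line $\ell_i=0$. Replace $\max\{0,\ell_i\}$ by whichever of $0$ or $\ell_i$ agrees with it on $Q$. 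This gives $F'\le F$ everywhere with $F'=F$ on $Q$, so $y^*$ is a local, hence global, minimizer of the convex $F'$, and $\min F'=\min F$. You must keep the oracle's half-planes (and $u\ge 0$) as constraints of the search region, because minimizers of $F'$ outside $Q$ need not minimize $F$. Your fallback of aggregating the $v_i$ into a constrained LP in two or three variables does not work. The epigraph of $\sum_i\max\{0,\ell_i\}$ has one facet per cell of the arrangement of the $n$ hinge lines, i.e.\ $\Theta(n^2)$ in general.

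The step that fails as written is primal recovery. Lemma~\ref{lemma:twofracitems} guarantees an optimal primal \emph{vertex} with at most two fractional entries. It does not bound how many hinge lines pass through the dual optimum $(u^*,\lambda^*)$. With repeated items (identical $(p_i,w_i,\kappa_i)$, giving coincident lines) or three concurrent lines, arbitrarily many items have zero reduced profit. A $2\times 2$ system is then not well posed and cannot tell you which tied variables to set to $0$ or $1$. Also, if $u^*=0$, the capacity constraint must not be imposed with equality.

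You therefore need an explicit tie-resolution step. One option is a symbolic perturbation $p_i+\varepsilon^i$ carried through all comparisons, which puts the lines in general position. A simpler option uses only $\lambda^*$, which minimizes your $L$ since $L(\lambda)=\min_{u\ge 0}F(u,\lambda)$:
\begin{itemize}
\item For small $\varepsilon>0$, compute the continuous-knapsack solutions $x^{+}$ and $x^{-}$ of $\max\{(p-\lambda g)^\top x : w^\top x\le b,\ x\in[0,1]^n\}$ at $\lambda=\lambda^*+\varepsilon$ and $\lambda=\lambda^*-\varepsilon$. This takes $\mathcal{O}(n)$ each, using lexicographic ratio comparisons.
\item Both are optimal at $\lambda^*$.
\item The one-sided derivatives of $L$ at its minimizer give $g^\top x^{+}\le 1\le g^\top x^{-}$.
\item The convex combination with $g^\top x=1$ is therefore optimal for \eqref{form:reducedckpLP} by Lagrangian sufficiency.
\end{itemize}

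A minor point: your aside that items with equal $g$ never swap order is false, since $(p_i-\lambda g_i)/w_i$ has slope $-g_i/w_i$ in $\lambda$. You abandon that line anyway, so it does not affect the argument.
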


\begin{proof}
    We observe that \eqref{form:reducedckpLP} is a linear program with $n$ variables subject to $2n$ simple bound constraints and exactly two structural constraints (Constraints \eqref{eq:reducedLPcapacity} and \eqref{eq:reducedLPcolorbalance}).
    The dual of this problem involves minimizing a convex piecewise linear function over a space of dimension $d=2$, where the two dual variables correspond to the capacity and the color constraints, respectively.
    Since the dimension $d$ is fixed and independent of the input size $n$, the problem falls into the class of low-dimensional linear programs solvable in linear time via the multidimensional search technique established by \citet{Megiddo1984}.
    Consequently, an optimal solution to \eqref{form:reducedckpLP} can be found in $\mathcal{O}(n)$ time.
\end{proof}

A detailed description of the multidimensional search procedure of \citet{Megiddo1984} is beyond the scope of this paper; we refer the interested reader to the original article for further details, as well as to the works by \citet{ZEMEL1984123} and \citet{megiddo1993}, which discuss applications of this technique to problems sharing structural similarities with ours. 

The results discussed in this section demonstrate that the introduction of color constraints does not increase the computational difficulty of the CKP LP-relaxation, compared to the standard KP. 
The crucial insight provided by Lemma~\ref{lemma:lp2ckp} is that $\ckpLP$ effectively collapses to a linear program with a fixed number of structural constraints (specifically, two) once a critical color (if it exists) is identified by solving the standard KP LP relaxation. This allows to extend the classic linear-time solvability property of $\kpLP$ to the colored variant as well.

\section{Two exact dynamic programming algorithms for solving the CKP}\label{sec:DP}
As discussed in \Cref{sec:intro}, the CKP is at least weakly $\mathcal{NP}$-hard as a generalization of the KP.
In this section, we provide two dynamic programming algorithms that run in pseudo-polynomial time, demonstrating that the problem is in fact also weakly $\mathcal{NP}$-hard.
The first runs in $\mathcal{O}(\capacity \, n^4)$ and the second in $\mathcal{O}(\capacity^2 \, n^3)$, so depending on the instance one or the other might have more promising performance. 

Both algorithms make use of the following observation: to evaluate a solution, we do not actually need to know which colors are dominant, but just the number of items in a dominant color.
So, in both approaches, a state does not keep track of the number of items in each color, but only considers the total number of items, the highest number of items in any color, and for the first algorithm also the number of items in the color of the current item.
This information is always sufficient to determine if a state is feasible, and allows a smaller state space than explicitly tracking the number of items in each color as in one of the approaches proposed by~\citet{BorgesSM24}.

\smallskip

To present our dynamic programming approaches more concisely, we introduce the following notation:
For any item set \(S \subseteq \items\), we denote  by $\profit(S) = \sum_{i \in S} \profit_i$ the total profit of the set and by \(\weight(S) = \sum_{i \in S} \weight_i\) its total weight. 
Moreover, recall that we denote the number of items of color $c \in \colors$ in $S$ by $k_c(S)$ (as introduced in \Cref{sec:intro}). In the following, we will also denote by $\overline{k}_c(S) = \max_{c \in \colors} \, k_c(S)$ the cardinality of a dominant color in $S$.

\subsection{First dynamic program: iterating item by item}
\label{sec:firstDP}
For our first approach, which we refer to as $\firstDP$, we assume the items are sorted by color.
We want to iteratively compute the function $f(i,t,d,a,q)$, that returns the optimal value for the CKP if we restrict the instance to the first $i$ items, pack exactly $t$ items in total, pack exactly $d$ items in the dominant color, pack exactly $a$ items of color $\kappa_i$ (of the last considered item) and use exactly $q$ capacity. 
More formally, we want to compute
\begin{equation}
    f(i,t,d,a,q) = \max\bigg\{\profit(S) \mid S \subseteq \{1,2,\dots, i\}, \, |S| = t, \, \overline{k}(S) = d, \, k_{\kappa_i}(S) = a, \, \weight(S) = q\bigg\}, \label{dp1:goal}
\end{equation}
if such a solution exists.
The optimal solution value for the entire instance can then be determined by finding \(\max_{\, t,d \in \{0,1,\dots,n\}, \, 2  d \leq t + 1, \, a \in \{0,1,\dots,n\}, \, q \in \{0,1, \dots, \capacity \}} f(n,t,d,a,q) \), i.e., finding the largest objective value among solutions that respect the color constraints.

For the sake of presentation, we describe the $\firstDP$ algorithm as it would be implemented, instead of the classic recursive formula, which is hard to understand due to various necessary case distinctions. 
See~\Cref{firstdp} for a pseudo-code description of $\firstDP$.
\begin{algorithm}
    \caption{$\firstDP$ algorithm for the CKP }
    \label{firstdp}
    \begin{algorithmic}[1]
        \Function{$\firstDP$}{$m, n, \boldsymbol \kappa, \boldsymbol\profit, \boldsymbol\weight, \capacity$}
        \State $f(i,t,d,a,q) \gets -\infty$ for all $i,t,d,a \in \{0,\dots,n\}, q \in \{0,\dots,\weight\}$ \Comment{Initialize  possible states}
        \State $f(0,0,0,0,0)  \gets 0 $ \Comment{Initialize starting state}
        \For{$i \in \{1,2,\dots,n\}$}
        \For{$t,d,a,q$ such that $f(i-1,t,d,a,q) > -\infty$} \label{dp1:extendablestates}
        \If{$(i = 1) \lor (\kappa_i = \kappa_{i-1})$} \Comment{Stay in same color}
            \State $a' \gets a$ 
        \Else \Comment{Change color -- reset counter}
            \State $a' \gets 0$ \label{dp1:colorswitch}
        \EndIf
        \State $s_\mathrm{pack} = (i, \, t+1, \, d+\sigma(a',d), \, a' + 1, \, q + \weight_i)$
        \State $f(s_\mathrm{pack}) \gets \max\{f(i-1,t,d,a,q) + \profit_i, \, f(s_\mathrm{pack})\}$ \Comment{Pack item}\label{dp1:packitem}
        \State $f(i,t,d,a',q) \gets \max \{f(i-1,t,d,a,q), f(i,t,d,a',q)\} $ \Comment{Do not pack item}\label{dp1:dontpackitem}
        \EndFor
        \EndFor
        \EndFunction
    \end{algorithmic}
\end{algorithm}

\smallskip

Let $\sigma(a, d): \mathbb{Z}_{\ge 0} \to \{0, 1\}$ be a function returning $1$ if $d \le a$, and $0$ otherwise. 
We start with a state for the empty packing, i.e., $f(0,0,0,0,0) = 0$ and set $f(i,t,d,a,q) = -\infty$ otherwise.
We then iterate through the items and extend all feasible states (Line \ref{dp1:extendablestates}) in two ways: 
\begin{enumerate}
    \item[i)] Packing item $i$ (Line \ref{dp1:packitem}). In this case, the total number of items $t$ increases by one, as does the number of items in the current color $a$. 
    The number of items packed in the currently dominant color increases only if the current color is dominant, i.e., if we have packed exactly $\overline k(S)$ items of the current color in the current solution $S$ associated to the state, and thus $\sigma(a,d) = 1$.  Finally, the weight of the packed items is increased by $w_i$. 
    Then, we set the value of this new state to the maximum between its previously determined value and that of the state we extend plus the profit of the current item, namely $p_i$.
    \item[ii)] Not packing item $i$ (Line \ref{dp1:dontpackitem}). Nothing changes about the packing except that we have considered one more item. The value of this next state is therefore set to the maximum between its previously determined value and that of the current state. 
\end{enumerate}

One special case occurs when we switch from one color to another, i.e., when $\kappa_i \neq \kappa_{i-1}$.
In this case, we set the counter of items packed in the current color back to zero, namely $a' \gets 0$ (Line \ref{dp1:colorswitch}).

\noindent The correctness of $\firstDP$ is formally stated in the following proposition.
\begin{proposition}
The $\firstDP$ algorithm computes the value of $f(i,t,d,a,q)$ for all $i,t,d,a \in \{0,1,\dots,n\}$, $q \in \{0,1,\dots, \capacity\}$ according to~\eqref{dp1:goal}, if a corresponding solution exists, in at most $\mathcal{O}(\capacity \, n^4)$ operations.
\end{proposition}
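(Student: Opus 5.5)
We prove correctness by induction on $i$, showing that after iteration $i$ every entry $f(i,t,d,a,q)$ equals the right-hand side of \eqref{dp1:goal} (with $-\infty$ when no such $S$ exists). The base case $i=0$ is immediate from the initialization: the only subset of the empty prefix is $S=\emptyset$, with $t=d=a=q=0$ and profit $0$.

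For the inductive step we establish two inequalities. For \emph{soundness}, every finite value written into $f(i,\cdot)$ is the profit of some $S\subseteq\{1,\dots,i\}$ with the stated parameters. For \emph{completeness}, every such $S$ has profit at most the stored value.

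Both follow from one bijection-type claim. Let $S'\subseteq\{1,\dots,i-1\}$ be represented by the state $(i-1,t,d,a,q)$. We claim that $S=S'$ is represented by $(i,t,d,a',q)$ and $S=S'\cup\{i\}$ by $s_\mathrm{pack}$.

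The parameters $t$ and $q$ update trivially. For $a'$, the sorting by color is essential. If $\kappa_i=\kappa_{i-1}$, then $k_{\kappa_i}(S')=a$. If $\kappa_i\neq\kappa_{i-1}$, then, since items are sorted by color, no earlier item has color $\kappa_i$, so $k_{\kappa_i}(S')=0=a'$. In both cases $a'=k_{\kappa_i}(S')$. For $i=1$ we have $a=0$ anyway.

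For $d$, adding item $i$ raises $k_{\kappa_i}$ from $a'$ to $a'+1$ and leaves the other counts unchanged. Hence $\overline k(S)=\max\{d,a'+1\}$. Since $a'\le d$, this equals $d+1$ exactly when $a'=d$, i.e.\ when $\sigma(a',d)=1$, and equals $d$ otherwise. This is the key step and the one I expect to need the most care. It relies on the invariant $a\le d$, which itself follows from the definitions in \eqref{dp1:goal}.

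Completeness then follows. Given an optimal $S$ for the parameters $(i,t,d,a,q)$, split on whether $i\in S$. Apply the induction hypothesis to $S'=S\setminus\{i\}$ at its own parameters, and note that the algorithm's max-update over predecessors includes that state. Soundness follows because each update adds exactly $p_i$ or $0$ to the value of a represented set.

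Two minor points need attention. First, packed states with $q+w_i>b$ are discarded; this is harmless because weights are positive, so such states can never return to $q\le b$. Second, the predecessor of a state is not unique in $a$ when a color switch occurs: several values of $a$ collapse to $a'=0$. This is why the algorithm takes a max rather than an assignment.

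For the running time, the table has $(n+1)^4(b+1)$ entries. Each of the $n$ outer iterations scans all $\mathcal{O}(n^3 b)$ states $(t,d,a,q)$ of layer $i-1$ and performs $\mathcal{O}(1)$ work per state, namely two max updates and evaluating $\sigma$. The total is $\mathcal{O}(b\,n^4)$. Initialization has the same cost. The final maximization over states with $2d\le t+1$ costs $\mathcal{O}(bn^3)$, so the overall bound is $\mathcal{O}(b\,n^4)$.
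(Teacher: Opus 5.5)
Your proof is correct and follows essentially the same route as the paper: induction on $i$, with the color-sorted reset $a'=0$ and the update $\overline{k}(S)=\max\{d,a'+1\}=d+\sigma(a',d)$ (using $a'\le d$) as the key steps, then a case split on whether $i\in S$, and a running-time count of $n$ layers times $\mathcal{O}(n^3 \capacity)$ states with constant work each. Two of your details are more careful than the paper's write-up without changing the argument: you compare against an arbitrary set $S$ with the given parameters, where the paper uses an optimal-substructure replacement argument, and you note that states with $q+\weight_i>\capacity$ can be safely discarded.
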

\begin{proof}
For $i \in \{0,1,\dots,n\}$, $t,d,a \in \{0,1,\dots,n\}$, and $q \in \{0,1,\dots,\capacity\}$, let $f^*(i,t,d,a,q)$ denote the maximum profit obtainable by selecting a subset of the first $i$ items such that exactly $t$ items are selected, the maximum number of selected items of any color equals $d$, exactly $a$ selected items have color $\kappa_i$, and the total weight equals $q$.
If no such subset exists, then we assume $f^*(i,t,d,a,q) = -\infty$.
The $\firstDP$ algorithm maintains a table $f(i,t,d,a,q)$, and we prove that $f(i,t,d,a,q) = f^*(i,t,d,a,q)$ for all $i,t,d,a,q$ by induction on $i$.

\smallskip

For $i=0$, no items are available. The only feasible solution is the empty set, which satisfies $t=d=a=q=0$, and has profit $0$. 
Thus, $f^*(0,0,0,0,0) = 0$, and all other states are infeasible. The algorithm correctly initializes this state and assigns $-\infty$ to all others, so the claim holds for $i=0$.

\smallskip

We now assume that, for some $i \ge 1$, the equality $f(i-1,t,d,a,q) = f^*(i-1,t,d,a,q)$ holds for all $(t,d,a,q)$. We therefore want to show that $\firstDP$ correctly computes $f(i,\cdot)$.
The algorithm iterates over all reachable states $(t,d,a,q)$ at stage $i-1$, i.e., those with $f(i-1,t,d,a,q) > -\infty$, and determines the correct counter $a'$ for the current color $\kappa_i$. 
If $i=1$ or $\kappa_i = \kappa_{i-1}$, then the counter is preserved, so $a' = a$. 
Otherwise, since items are sorted by color, item $i$
is the first occurrence of color $\kappa_i$, and the counter is correctly reset to $a' = 0$. 
From each such state, it considers the two exhaustive and mutually exclusive choices: excluding or including item $i$ among the selected items. 
It follows that: 
i) if $i$ is not selected, the selected set remains unchanged. The total number of selected items, the number of items of each dominant color, and the total weight remain $t$, $d$, and $q$, respectively. Only the counter for the current color is updated to $a'$. The resulting state is therefore $(i,t,d,a',q)$. The algorithm updates $f(i,t,d,a',q)$ by taking the maximum between its current value and the propagated value $f(i-1,t,d,a,q)$; 
ii) if item $i$ is selected, the total number of selected items increases to $t+1$, the total weight to $q+\weight_i$, and the counter of the current color to $a'+1$. 
The value of $d$ must be updated accordingly. 
Since $d$ is the maximum frequency among all colors in the partial solution at stage $i-1$, and the current color count was $a'$, the new dominant frequency is $\max\{d,\, a'+1\}$. 
Because $d \ge a'$, this update can be written as $d + \sigma(a',d)$, where
$\sigma(a',d) = 1$ if $a' = d$ and $0$ otherwise. 
The algorithm thus transitions to the state $(i,\, t+1,\, d+\sigma(a',d),\, a'+1,\, q+\weight_i)$ and updates its value to $f(i-1,t,d,a,q) + \profit_i$, taking the maximum with any previously stored value.

\smallskip

To show that these transitions preserve optimality, consider an optimal solution to a given state $(i,t,d,a,q)$, of value $f^*(i,t,d,a,q)$. 
Item $i$ is either selected or not. 
If item $i$ is not selected, the solution uses only items from $\{1, 2,\dots,i-1\}$ and corresponds to a feasible state at stage $i-1$ with the same profit. 
By the inductive hypothesis, this value is stored in $f(i-1,\cdot)$ and is propagated to stage $i$ by the ``do not pack'' transition.
If item $i$ is instead selected, removing it yields a feasible solution for the first $i-1$ items. 
This partial solution must be optimal for its parameters; otherwise, replacing it with a better one would improve the full solution, contradicting optimality. 
By the inductive hypothesis, its value is correctly stored in $f(i-1,\cdot)$, and the ``pack'' transition adds
$\profit_i$ and updates the state parameters consistently.\\
Since the algorithm considers both cases for every reachable state at stage $i-1$ and updates each destination state by taking the maximum profit over all incoming transitions, it follows that $f(i,t,d,a,q) = f^*(i,t,d,a,q)$ for all $t,d,a,q$.
By induction, this holds for all $i \in \{0,1,\dots,n\}$.

\smallskip

The state space of $\firstDP$ is defined by
$i,t,d,a \in \{0,1,\dots,n\}$ and $q \in \{0,1,\dots,\capacity\}$, and thus contains
$\mathcal{O}(n^4 \capacity)$ states. 
For each reachable state, the algorithm performs a
constant number of operations. 
Consequently, the overall time complexity of $\firstDP$ is $\mathcal{O}(n^4 \capacity)$.
\end{proof}

\medskip

We note that, to actually obtain an optimal CKP solution and not just the optimal solution value, we can store for each state the predecessor state that was extended to reach it, which then allows to backtrack and determine the packed items. 

\subsection{Improvements for $\firstDP$}
\label{sec:DPimprovements}
In this section, we propose some methods to improve the runtime of~\Cref{firstdp} in practice.

First, note that many states permit no feasible solution (i.e., their value is never updated and remains equal to $-\infty$). 
Hence, instead of storing the full dynamic programming table for all values of $(i,t,d,a,q)$, we propose to use a sparse data structure, like a hash map, and only store states that $\firstDP$ actually reaches. 
Second, we observe that some states can actually be merged or fathomed, because they do in fact represent identical partial solutions with respect to feasibility or can not be completed into optimal or feasible solutions.

\subsubsection{Dominance rules} We observe that there may exist pairs of states such that one of these \textit{dominates} the other, meaning that (given the same extension is applied to both the states) one will surely lead to a higher value than the other. It is therefore possible to add some dominance rule that stems from the fact that we so far computed the states as if we solved an exact knapsack problem, i.e., that the capacity constraint has to be fulfilled at equality.

\paragraph{DOMINANCE 1} Consider two states $s_1 = (i,t_1,d_1,a_1,q_1)$ and $s_2 = (i,t_2,t_2,a_2, q_2)$, such that
\begin{equation*}
t_1 \geq t_2, \quad d_1 \leq d_2, \quad a_1 \leq a_2, \quad q_1 \leq q_2 \quad \text{ and } \quad  f(s_1) \geq f(s_2).
\end{equation*}

Then, $s_1$ dominates $s_2$, which can therefore be discarded.  
The reasoning is that every extension that can be applied to $s_2$ that results in a feasible solution can also be applied to $s_1$, since it is never further away from being feasible.
Observe that $s_1$ has no less items packed in total and no more items packed in the dominant and current color, so it is not further away from fulfilling the color constraint than $s_2$.
Additionally, $s_1$ has no less remaining capacity.
Since it also gives at least the same profit, there is no need to keep state $s_2$.

For practical purposes, preliminary experiments showed that it is beneficial to only check this dominance criterion when a color was just finished and all values of $a$ can be set to zero, so significantly less comparisons are necessary. For the same reason, we only check states with $t_1=t_2$, instead of all those with $t_1\geq t_2$.

\paragraph{DOMINANCE 2} 
The next group of state space reductions concerns situations in which two or more states are identical with regards to feasibility of the color constraints.
First, consider the case that we switch to a new color.
A state with $2d \leq t + 1$ is feasible with regards to the color constraint and all previously considered colors cannot become dominant anymore. 
So, at the first item of a new color, we can simply set the value of $d$ in the new states to zero if $2d \leq t+1$ for the state that is being extended.
A variant of this reduction rule can also be applied within a color: if the number of remaining items $\bar{n}$ in the current color is not sufficient to construct an infeasible solution, i.e., $2d \leq t+1$ and $2(a + \bar{n}) \leq t + 1 $, we can set $d = 0$ and $a = 0$. 
We can also set $a=0$ if $a + \bar{n} \leq d$, because the current color cannot become dominant at all. 
By mapping $d$ or $a$ to zero, we merge multiple states and only keep the non-dominated ones.

\subsubsection{Fathoming rules}
In order to reduce the number of states that can be generated during the algorithm execution, it is also crucial to fathom some states based on optimistic bounds regarding the potential completions of the current partial solutions associated to them.
Specifically, we fathom a state if it cannot become feasible or if it cannot achieve a better solution value than the best known primal bound with the remaining available items and capacity.
To do so, we need to know the value
\begin{equation*}
    \bar{\profit}(i,q) =  \max\{\profit(S) \mid S \subseteq \{i,i+1,\dots,n\}, \weight(S) \leq q\},
\end{equation*}
which is the maximum profit achievable with the remaining items starting from item $i$ and with $q$ capacity or less, disregarding the color constraints. We also need to know the value
\begin{equation*}
    \bar{n}(i,q) = \max\{|S| \mid S \subseteq \{i,i+1,\dots,n\}, \weight(S)\leq q\},
\end{equation*}
which is the maximum number of items that can be packed among the remaining ones starting from item $i$, and with $q$ capacity or less, disregarding the color constraints. 
Note that both values can be precomputed in $\mathcal{O}(\capacity \, n)$ operations for all $(i,q)$ by solving a regular KP on the instance with the items ordered in reverse, using dynamic programming. We can therefore derive the following fathoming rules.

\paragraph{FATHOMING 1}
    Let $LB$ denote any primal bound on the optimal CKP solution value.
    Then, if for a state $s = (i,t,d,a,q)$ it holds that if
    \begin{equation*}
    f(s) + \bar{\profit}(i+1,\capacity - q) \leq LB,
    \end{equation*}
    the state $s$ cannot lead to any solution of value greater than $LB$ and, therefore, it can be discarded.

\paragraph{FATHOMING 2}
    Let $i' = \min\{j \in \{i,i+1,\dots,n\} \mid \kappa_j \neq \kappa_i\}$ denote the index of the first item whose color is different from $\kappa_i$.
    If for a state $s = (i,t,d,a,q)$ it holds that
    \begin{equation*}
    2d > \begin{cases}
         t + 1 + \bar{n}(i+1,\capacity - q), \qquad & \text{ if } a < d,\\[1.5ex]
         t + 1 + \bar{n}(i', \capacity - q), & \text{ otherwise,}
    \end{cases}
    \end{equation*}
    then the state $s$ cannot lead to any feasible solution and, therefore, it can be discarded.

\smallskip

We use the information we obtain from determining $\bar{\boldsymbol \profit}$ also in another way: for all $i \in \items$, $q \in \{0,1,\dots, \capacity\}$, we determine the solution leading to the value of $\bar{\profit}(i,q)$; we then compute the total number of items $\bar{t}(i,q)$, the number of items in the dominant color $\bar{d}(i,q)$, and the number of items $\bar{a}(i,q)$ in color $\kappa_i$ for this solution.
Then, the following result holds.

\paragraph{FATHOMING 3}
    If for a state $s = (i,t,d,a,q)$ it holds that
    \begin{equation*}
    \max\left\{a + \bar{a}(i+1,\capacity - q), \, d, \, \bar{d}(i+1,\capacity-q) \right\} \, \leq \, t+\bar{t}(i+1, \, \capacity- q) + 1,
    \end{equation*}
    then the primal bound $LB$ is updated to $f(s) + \bar{p}(i+1, b-q)$, the combined solution is stored, and state~$s$ is fathomed.

\medskip

\subsection{Example for $\firstDP$}
\Cref{fig:firstDPexample} shows the execution of $\firstDP$, including the proposed improvements, on the instance introduced in \Cref{fig:example1}, with $n=4$, $m =2$, and $b=10$.
In the figure, each node represents a generated state identified by the tuple $s = (t, d, a, q)$, with the associated profit $f(s)$ displayed above the tuple.
Solid arrows correspond to the decision of packing the current item, while dashed arrows represent skipping it. Fathomed states are highlighted in red, and report the reason why they have been fathomed below their associated node. The nodes are divided into layers, each corresponding to a different stage of the algorithm (and, therefore, to a different item considered for packing). 
The header of each layer shows the item index, along with its weight and profit in tuple format. Finally, the path highlighted in green indicates the sequence of decisions leading to the found optimal solution.
Note that some of the states are discarded according to the first fathoming rule: as an example, take state $s=(1,1,1,6)$ at Stage 3, which has a value of $15$.
This state cannot be extended to achieve a value greater than $18$ (the current lower (primal) bound) and it is therefore fathomed by bound.
For a visualization of the effect of the second fathoming rule, consider instead the state generated at Stage 2 by packing both items of color $c=1$ (the bottom-most node). 
Although this partial solution yields a high profit, equal to $23$, it is immediately fathomed. Indeed, since the capacity is fully utilized ($q=10$), it is impossible to pack any subsequent items of color $c=2$, required to restore feasibility. Consequently, the condition $2d \le t+1$ cannot be satisfied by any completion, and the state is discarded.
Notice that, for the sake of presentation, we are not considering the effect of the third fathoming rule: indeed, if we applied such rule, the state $s = (1,1,1,6)$, at Stage 2, would be immediately expanded to the optimal one, and discarded accordingly.

\begin{figure}[htbp]
    \centering
    \usetikzlibrary{shapes.multipart}
\usetikzlibrary{positioning}      
\usetikzlibrary{arrows.meta}      
\usetikzlibrary{backgrounds}      
\usetikzlibrary{calc}             

\pgfdeclarelayer{background}
\pgfsetlayers{background,main}

\begin{tikzpicture}[
    font=\scriptsize,
    >=Stealth,
    xscale=3.2, 
    yscale=1.6,
    state/.style={
        rectangle split,
        rectangle split parts=2,
        draw=black!80,
        rounded corners=2pt,
        fill=white,
        align=center,
        inner sep=3pt,
        text width=1.4cm
    },
    fathomed/.style={
        state,
        draw=red!80,
        fill=red!12,
        dashed
    },
    optimal/.style={
        state,
        draw=green!60!black,
        fill=green!10,
        ultra thick
    },
    suboptimal/.style={
        state,
        draw=black!60,
        fill=gray!5,
        dashed
    },
    info/.style={
        font=\tiny,
        text=gray
    }
]

\begin{scope}[on background layer]
    \fill[red!5] (0.5, 0.5) rectangle (2.5, -4.5);
    \node[red!70, font=\large, anchor=south west] at (0.5, -4.5) {$c=1$};
    
    \fill[blue!5] (2.5, 0.5) rectangle (4.5, -4.5);
    \node[blue!70, font=\large, anchor=south west] at (2.5, -4.5) {$c=2$};
\end{scope}

\foreach \i in {0,...,4} {
    \node[gray, font=\bfseries] at (\i, 1) {$\boldsymbol{i = \i}$};
}

\node[gray, font=\bfseries\footnotesize] at (1, 0.7) {$\boldsymbol{(6,15)}$};
\node[gray, font=\bfseries\footnotesize] at (2, 0.7) {$\boldsymbol{(4,8)}$};
\node[gray, font=\bfseries\footnotesize] at (3, 0.7) {$\boldsymbol{(2,3)}$};
\node[gray, font=\bfseries\footnotesize] at (4, 0.7) {$\boldsymbol{(1,1)}$};

\node[state] (s0) at (0,0) {
    $f=0$
    \nodepart{two} $0,0,0,0$
};

\node[fathomed] (s1_skip) at (1, 0) {
    $f=0$
    \nodepart{two} $0,0,0,0$
};
\node[red, font=\bfseries\tinysize, below=1pt of s1_skip] {F1: bound};

\node[state] (s1_pack) at (1, -2) {
    $f=15$
    \nodepart{two} $1,1,1,6$
};
\node[green!60!black, font=\bfseries\tinysize, below=1pt of s1_pack] {$LB=15$};

\node[state] (s2_skip) at (2, -1.5) {
    $f=15$
    \nodepart{two} $1,1,1,6$
};

\node[fathomed] (s2_pack) at (2, -3.5) {
    $f=23$
    \nodepart{two} $2,2,2,10$
};

\node[red, font=\bfseries\tinysize, below=1pt of s2_pack] {F2: Infeasible};

\draw[dotted, thick, gray] (2.5, 0.5) -- (2.5, -4.5) 
    node[fill=white, inner sep=1pt, font=\footnotesize, anchor=north, yshift= 1pt] at (2.5, 0.5) {$a \gets 0$};

\node[fathomed] (s3_skip) at (3, -1) {
    $f=15$
    \nodepart{two} $1,1,1,6$
};
\node[red, font=\bfseries\tinysize, below=1pt of s3_skip] {F1: bound};

\node[state] (s3_pack) at (3, -2.5) {
    $f=18$
    \nodepart{two} $2,1,1,8$
};
\node[green!60!black, font=\bfseries\tinysize, below=1pt of s3_pack] {$LB=18$};

\node[optimal] (s4_pack) at (4, -3) {
    $f=19$
    \nodepart{two} $3,2,2,9$
};
\node[green!50!black, font=\bfseries\tinysize, below=1pt of s4_pack] {OPTIMAL};

\node[suboptimal] (s4_skip) at (4, -1.8) {
    $f=18$
    \nodepart{two} $2,1,1,8$
};
\node[black!60, font=\bfseries\tinysize, below=1pt of s4_skip] {suboptimal};

\draw[->, thick, dashed, opacity=0.3] (s0) -- (s1_skip);
\draw[->, ultra thick, green!60!black] (s0) -- (s1_pack);

\draw[->, dashed, ultra thick, green!60!black] (s1_pack) --(s2_skip);
\draw[->, thick] (s1_pack) -- (s2_pack);

\draw[->, dashed, thick, opacity=0.5] (s2_skip) -- (s3_skip);
\draw[->, ultra thick, green!60!black] (s2_skip) -- (s3_pack);

\draw[->, ultra thick, green!60!black] (s3_pack) -- (s4_pack);
\draw[->, thick,  dashed, opacity=0.5] (s3_pack) -- (s4_skip);

\end{tikzpicture}
    
    \caption{Visual execution of $\firstDP$ on the instance from \Cref{fig:example1}.}
    \label{fig:firstDPexample}
\end{figure}

\subsection{Second dynamic program: iterating color by color}
\label{sec:secondDP}
The second dynamic programming algorithm, which we refer to as $\secondDP$, is based on a decomposition idea.
For each color $c \in \colors$, we determine the best possible packing for every possible number of items of that color and capacity usage, and then we solve a variant of the Multiple-Choice Knapsack Problem \citep[MCKP, see][]{KPP04li} where at most one packing can be selected for each color, while also adhering to the color constraints. 
To determine the relevant packings for a given color we utilize an `inner` dynamic program, and then solve the modified MCKP with an `outer` dynamic program.

The inner dynamic program determines the value
\begin{equation}\label{dp2:innerdpggoal}
    f_c(k,q) = \max\{\profit(S) \mid S \subseteq \items_c, \, \weight(S) = q, \, |S| = k\}, 
\end{equation}
which is the maximum profit obtainable if we solve a regular KP restricted to items in a given color $c \in \colors$, using exactly $q \in \{0,1,\dots, \capacity\}$ capacity, and packing exactly $k \in \{0,1,\dots, |\items_c|\} $ items.
If no such packing exists for some $(k,q)$, we set $f_c(k,q) = - \infty$.
The values for all feasible $k \in \{0,1,\dots, |\items_c|\}$ and $q \in \{0,1,\dots,\capacity\}$ can be determined by dynamic programming in at most $\mathcal{O}(\capacity \, |\items_c|^2)$ operations per color (see, e.g., \cite{KPP04li}). %

For the outer dynamic program, we want to determine the optimal CKP solution value if we only consider the first \(c\) of the $m$ colors, pack exactly $t$ items, of which $d$ are in each dominant color, and use exactly $q$ capacity.
Let $\mathcal{S}_c = \{(k,q) \mid f_c(k,q) > -\infty\}$ denote the combinations of number of items $k$ and capacity consumption $q$ for which there exists a feasible packing in color $c \in \colors$.
Formally, we want to determine
\begin{equation}\label{dp2outergoal}
\begin{aligned}
    f(c,t,d,q) = \max \Bigg\{\sum_{c' = 1}^c f_{c'}(k_{c'}, q_{c'}) \, \big| \, & \bigg((k_1,q_1), (k_2,q_2), \dots, (k_c,q_c)\bigg) \in \mathcal{S}_1 \times \mathcal{S}_2 \times \dots \times \mathcal{S}_c,\\[-1ex]
    & \sum_{c' = 1}^c k_{c'} = t, \, \max_{c' \in \{1,2,\dots,c\}} \, k_{c'} = d, \, \sum_{c' = 1}^c q_{c'} = q \Bigg\}
\end{aligned}
\end{equation}
for all \(c \in \{0, 1, \dots,m\}\), \(q \in \{0, 1, \dots,\capacity\}\), \(t \in \{0, 1, \dots,n\}\), and \(d \in \{0, 1, \dots,n\}\). We set \(f(c,q,t,d)\) to \(-\infty\) if no solution satisfies these conditions.
Then, we obtain the value of an optimal CKP solution as $\max_{t, d \in \{1,2,\dots,n\}, \, 2d \leq t + 1, \, q \in \{1,2,\dots,\capacity\}} f(m,t,d,q)$.

As for $\firstDP$, we explain this algorithm as it would be implemented instead of stating the corresponding recursion.
See~\Cref{seconddp} for a description in pseudo-code.
\begin{algorithm}
    \caption{$\secondDP$ algorithm for the CKP }
    \label{seconddp}
    \begin{algorithmic}[1]
        \Function{$\secondDP$}{$m,n, \boldsymbol \kappa, \boldsymbol\profit , \boldsymbol\weight, \capacity$}
        \State $f(c,t,d,q) \gets -\infty$ for all $c \in \{0, 1,\dots,m\}$, $t,d \in \{0,1,\dots,n\}$, $q \in \{0,1,\dots,\capacity\} $
        \State $f(0,0,0,0) \gets 0$ \Comment{Initialize starting state}
        \For{$c \in \{1,2,\dots,m\}$}
        \State determine $\mathcal{S}_c$ \Comment{Solve inner dynamic program}
        \For{$t,d,q$ such that $f(c-1,t,d,q) > -\infty$}
        \For{$(k,\Delta q) \in \mathcal{S}_c$ with  $q + \Delta q \leq \capacity $}
        \State $s_\mathrm{new} \gets (c, \, t + k, \, \max\{d,k\}, \, q+ \Delta q)$
        \State $f(s_\mathrm{new}) \gets \max\{f(s_\mathrm{new}), \, f(c-1,t,d,q) + f_{c}(k, \Delta q)\}$ \Comment{Try packing}
        \EndFor
        \EndFor
        \EndFor
        \EndFunction
    \end{algorithmic}
\end{algorithm}

For the initial state, we set \(f(0,0,0,0)\) to \(0\) and all other states to \(-\infty\).
Then we iterate through the colors and for the \(c\)-th color we extend each state $(c-1,t,d,q)$ with non-negative profit as follows:
for each feasible packing in this color using $\Delta q$ capacity and packing $k$ items (i.e., $(k,\Delta q) \in \mathcal{S}_c$) that still fits into the knapsack (i.e, $\Delta q \leq \capacity - q$), we determine the state we would reach if we selected this packing.
Specifically, the total number of items increases by $k$; the number of items in the dominant color is the maximum of what it was before and $k$, i.e., $d$ is only modified if we pack more items in the current color than in any previous color; the capacity usage is increased by $\Delta q$. 
Note that we also consider the null-transition with $\Delta q = k = 0$, i.e., packing no items of the current color.

\smallskip

\noindent The correctness of $\firstDP$ is formally stated in the following proposition.
\begin{proposition}
    The $\secondDP$ algorithm computes the value of $f(c,t,d,q)$ for $c \in \{0,1,\dots,m\}$, $t,d \in \{0,1,\dots,n\}$ and $q \in \{0,1,\dots,\capacity\}$ according to~\eqref{dp2outergoal}, if a corresponding solution exists, in at most $\mathcal{O}(\capacity^2 \, n^3)$ operations.
\end{proposition}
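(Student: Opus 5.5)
We argue by induction on the color index $c$, in the same way as for the correctness proof of $\firstDP$. We also use the correctness of the inner dynamic program computing $f_c(k,q)$ as in \eqref{dp2:innerdpggoal}, a standard cardinality-constrained knapsack recursion. Let $f^*(c,t,d,q)$ denote the right-hand side of \eqref{dp2outergoal}, with $f^*=-\infty$ for an empty set. The base case $c=0$ is immediate: the only feasible tuple is the empty product, so $(t,d,q)=(0,0,0)$ has value $0$ and every other state is infeasible. This matches the initialization of \Cref{seconddp}.

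For the inductive step, assume $f(c-1,\cdot)=f^*(c-1,\cdot)$ for all arguments. We prove two inequalities.

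For $f(c,t,d,q)\le f^*(c,t,d,q)$: every update of $f(c,\cdot)$ comes from a reachable state $(c-1,t',d',q')$ and a pair $(k,\Delta q)\in\mathcal{S}_c$. Appending $(k,\Delta q)$ to a tuple attaining $f^*(c-1,t',d',q')$ gives a tuple in $\mathcal{S}_1\times\dots\times\mathcal{S}_c$. This tuple has $t'+k$ items, maximum count $\max\{d',k\}$, weight $q'+\Delta q$, and value $f(c-1,t',d',q')+f_c(k,\Delta q)$, which is exactly the updated state and value.

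For $f(c,t,d,q)\ge f^*(c,t,d,q)$: take an optimal tuple for $(c,t,d,q)$ and remove its last component $(k_c,q_c)$. The prefix is feasible for the state $(c-1,t-k_c,d',q-q_c)$, where $d'=\max_{c'<c}k_{c'}$ satisfies $\max\{d',k_c\}=d$. Its value is at most $f^*(c-1,\cdot)=f(c-1,\cdot)$, and in fact equal, since otherwise a better prefix would improve the optimum, as the objective is separable. Because $q\le\capacity$, the algorithm enumerates this pair with $q-q_c+q_c\le \capacity$, so $f(c,t,d,q)$ receives at least the optimal value. The null pair $(0,0)$ lies in each $\mathcal{S}_c$, which covers colors with no packed items. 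The maximization over the final states with $2d\le t+1$ then returns the CKP optimum, by the color-condition characterization of \citet{BorgesSM24}.

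The step needing most care is the running-time bound; correctness is routine. For color $c$ there are at most $\mathcal{O}(n^2\capacity)$ reachable states $(t,d,q)$ and at most $|\mathcal{S}_c|\le(|\items_c|+1)(\capacity+1)$ transitions from each, so the outer loop costs $\mathcal{O}(n^2\capacity\cdot|\items_c|\capacity)$ for that color. Summing over colors with $\sum_c(|\items_c|+1)\le n+m$ gives $\mathcal{O}(\capacity^2n^2(n+m))$. Here we use that only colors with at least one item need to be processed, so $m\le n$ and the bound is $\mathcal{O}(\capacity^2n^3)$. The inner dynamic programs cost $\sum_c\mathcal{O}(\capacity|\items_c|^2)=\mathcal{O}(\capacity n^2)$ in total, and the final maximization costs $\mathcal{O}(n^2\capacity)$. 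Both are dominated by the outer loop.
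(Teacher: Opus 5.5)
Your proof is correct and follows essentially the same route as the paper. Both use induction on $c$: an optimal tuple is split into its last color's packing $(k,\Delta q)\in\mathcal{S}_c$ and a prefix that is optimal for the state $(c-1,t-k,d',q-\Delta q)$, and the running time comes from the same per-stage count $\mathcal{O}(n^2 b)\cdot\mathcal{O}(|\mathcal{I}_c|\, b)$ summed over colors. You are somewhat more careful than the paper in stating the direction $f\le f^*$ explicitly, and in accounting for the null pair and for empty colors; the paper's bound $|\mathcal{S}_c|=\mathcal{O}(|\mathcal{I}_c|\, b)$ silently assumes every color is nonempty.
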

\begin{proof}
   For $c \in \{0,1,\dots,m\}$, $t,d \in \{0,1,\dots,n\}$, and $q \in \{0,1,\dots,\capacity\}$, let $f^*(c,t,d,q)$ denote the maximum profit obtainable by selecting items from the first $c$ colors such that exactly $t$ items are selected in total, the maximum number of selected items of any single color equals $d$, and the total weight equals $q$. If no such solution exists, we set $f^*(c,t,d,q) = -\infty$. 
The algorithm maintains a table $f(c,t,d,q)$. We prove that $f(c,t,d,q) = f^*(c,t,d,q)$ for all $c,t,d,q$ by induction on $c$.

\smallskip

For $c=0$, no colors are considered.
The only feasible solution selects no items, has zero weight, and $d=0$. Hence, $f^*(0,0,0,0)= 0$, and all other states are infeasible. 
The algorithm initializes exactly this state and assigns $-\infty$ to all others, so the claim holds.

\smallskip

We now assume that $f(c-1,t,d,q) = f^*(c-1,t,d,q)$ holds for all states. Consider an optimal solution corresponding to $f^*(c,t',d',q')$.
Such a solution selects a packing of color $c$ consisting of $k$ items with total weight $\Delta q$, where $(k,\Delta q) \in \mathcal{S}_c$.
The remaining items form a feasible solution over the first $c-1$ colors with parameters $t = t' - k$, $q = q' - \Delta q$, and $d = \max_{c' < c} k_{c'}(S)$. 
The value of $d$ of the full solution is therefore $d' = \max\{d,k\}$. \\
By optimality, the partial solution over the first $c-1$ colors must be optimal for the parameters $(c-1,t,d,q)$; otherwise, replacing it with a better one would increase the total profit. 
By the induction hypothesis, its profit equals $f(c-1,t,d,q)$. 
Moreover, the packing chosen for color $c$ must be optimal among all packings of $k$ items with weight $\Delta q$; otherwise, it could be replaced by a strictly better packing, contradicting the correctness of the inner dynamic program. \\
Thus, the total profit of the solution equals $f(c-1,t,d,q) + f_c(k,\Delta q)$,
and the algorithm considers this transition and updates the state
$(c,\, t+k,\, \max\{d,k\},\, q+\Delta q)$. Since the algorithm maximizes over all feasible predecessors and all feasible packings in
$\mathcal{S}_c$, it follows that $f(c,t',d',q') = f^*(c,t',d',q')$.
This completes the inductive proof.

\smallskip
The total running time of $\secondDP$ is the sum of the costs of the inner and outer dynamic programs. 
For a fixed color $c$, the inner dynamic program computes the table $f_c(k,q)$ in $\mathcal{O}(|\items_c|^2 \capacity)$ time. 
Summing over all colors and observing that $\sum_{c=1}^m |\items_c|^2 \le \Bigl(\sum_{c=1}^m |\items_c|\Bigr)^2 = n^2$, the total time for the inner phase is $\mathcal{O}(n^2 \capacity)$.
The outer dynamic program consists instead of $m$ stages. 
At each stage $c$, the state space $(t,d,q)$ has size $\mathcal{O}(n^2 \capacity)$. 
For each state, the algorithm iterates over all feasible packings in $\mathcal{S}_c$, whose size is bounded by $\mathcal{O}(|\items_c| \capacity)$. Thus, the time required at stage $c$ is $\mathcal{O}(n^2 \capacity \cdot |\items_c| \capacity) = \mathcal{O}(n^2 \capacity^2 |\items_c|)$.
Summing over all colors yields a total time complexity of
\begin{equation*}
    \mathcal{O}\left(n^2 \capacity^2 \sum_{c=1}^m |\items_c|\right)
= \mathcal{O}(n^3 \capacity^2).
\end{equation*}
\end{proof}

\subsection{Improvements for the second dynamic program}
Most of the improvements from $\firstDP$ can also be applied to $\secondDP$.
Specifically, we also use a sparse data structure and do the following reductions:
\begin{enumerate}
    \item[i)] For two states $s_1 = (i,t,d_1,q_1)$ and $s_2 = (i, t, d_2, q_2)$ with $q_1 \leq q_2$, $d_1 \leq d_2$ and $f(s_1) \geq f(s_2)$, it is possible to discard $s_2$.
    \item[ii)] For a state $(i,t,d,q)$ with $2d \leq t + 1$, we can set $d$ to $0$ because the dominant color among those considered so far is not critical and cannot become so anymore. 
\end{enumerate}

We also fathom states that cannot become feasible or optimal with the remaining items. 
Specifically, let $\bar{\profit}(i, q)$ and $\bar{n}(i,q)$ be the best possible profit and highest possible number of items that can be packed if we only consider colors from the $i$-th and using at most $q$ units of capacity. 
Then, we fathom a state $s = (i,t,d,q)$ if $f(s) + \bar{\profit}(i+1,\capacity-q) < LB$ for some primal bound $LB$ or if $2d > t + 1 + \bar{n}(i+1,\capacity-q)$.
We use the same criteria to discard states in the inner DP that can never be part of a feasible or optimal solution when considering the items of all other colors.

\subsection{Example for $\secondDP$}
\Cref{fig:firstDPexample} shows the execution of $\secondDP$ on the instance introduced in \Cref{fig:example1}, with $n=4$, $m =2$, and $b=10$.
In the figure, each node represents a generated state identified by the tuple $s = (t, d, q)$, with the associated profit $f(s)$ displayed above the tuple.
The nodes are divided into layers, each corresponding to a different stage of the algorithm (and, therefore, to a different color). 
Solid arrows correspond to the decision of packing at least one item of the current color, while dashed arrows represent the choice of packing none of its items. 
Above each arc we show the set of items packed in the next color and the corresponding profit.
In the upper section of each layer, the figure shows the packing options returned by the 'inner' dynamic program restricted to the sole items of that color.
For each option, we list the corresponding set of items $S$, number of items $k = |S|$, capacity usage $\Delta q$ and profit $f_c(k, \Delta q)$. 
All the remaining elements of the figure have the same meaning as in \Cref{fig:firstDPexample}.

\begin{figure}[htbp]
    \centering
    \begin{tikzpicture}[
    font=\scriptsize,
    >=Stealth,
    x=3.8cm, 
    y=1.3cm,
    state/.style={
        rectangle split,
        rectangle split parts=2,
        draw=black!80,
        rounded corners=2pt,
        fill=white,
        align=center,
        inner sep=3pt,
        text width=1.4cm
    },
    fathomed/.style={
        state,
        draw=red!80,
        fill=red!12,
        dashed
    },
    optimal/.style={
        state,
        draw=green!60!black,
        fill=green!10,
        ultra thick
    },
    suboptimal/.style={
        state,
        draw=black!60,
        fill=gray!5,
        dashed
    },
    inner_dp_box/.style={
        draw=gray!40,
        fill=white,
        dashed,
        rounded corners,
        font=\tiny,
        align=left,
        inner sep=4pt
    }
]

\begin{scope}[on background layer]
    \fill[red!5] (0.2, 1.5) rectangle (1.8, -6.5);
    
    \fill[blue!5] (1.8, 1.5) rectangle (3.4, -6.5);
\end{scope}

\draw[dotted, thick, gray] (1.8, 1.5) -- (1.8, -6.5);

\node[gray, font=\bfseries] at (-0.6, 1.8) {$\boldsymbol{c=0}$};
\node[gray, font=\bfseries] at (1, 1.8) {$\boldsymbol{c=1}$};
\node[gray, font=\bfseries] at (2.6, 1.8) {$\boldsymbol{c=2}$};

\node[inner_dp_box,scale=0.8, anchor=north] at (1,1.4)
{
\begin{tabular}{cccc}
     $S$& $k$ & $\Delta q$ & $f_1(k, \Delta q)$ \\
     \hline 
     $\emptyset$ & 0 & 0 & 0 \\
     $\{1\}$ &  1 & 6 & 15 \\
     $\{2\}$ & 1 & 4 & 8 \\
      $\{1,2\}$ & 2 & 10 & 23
\end{tabular}
};

\node[inner_dp_box,scale=0.8, anchor=north] at (2.6,1.4)
{
\begin{tabular}{cccc}
     $S$& $k$ & $\Delta q$ & $f_2(k, \Delta q)$ \\
     \hline 
     $\emptyset$ &0 & 0 & 0 \\
     $\{3\}$ &  1   & 2 & 3 \\
     $\{4\}$ & 1    & 1 & 1 \\
      $\{3,4\}$ & 2 & 3 &4 
\end{tabular}
};

\node[state] (s0) at (-0.6,-3) {
    $f=0$
    \nodepart{two} $0, 0, 0$
};

\node[fathomed] (s1_both) at (1, -5.25) {
    $f=23$
    \nodepart{two} $2, 2, 10$
};
\node[red, font=\bfseries\tinysize, below=1pt of s1_both] {F2: Infeasible};

\node[state] (s1_opt) at (1, -2.25) {
    $f=15$
    \nodepart{two} $1, 1, 6$
};
\node[green!50!black, font=\bfseries\tinysize, below=1pt of s1_opt] {$LB=15$};

\node[fathomed] (s1_item2) at (1, -3.75) {
    $f=8$
    \nodepart{two} $1, 1, 4$
};
\node[red, font=\bfseries\tinysize, below=1pt of s1_item2] {F1: Bound};

\node[fathomed] (s1_empty) at (1, -0.75) {
    $f=0$
    \nodepart{two} $0, 0, 0$
};
\node[red, font=\bfseries\tinysize, below=1pt of s1_empty] {F1: Bound};

\draw[->, thick, opacity=0.5] (s0) -- node[sloped, above] {$\{1,2\}\to 23$} (s1_both);
\draw[->, ultra thick, green!60!black] (s0) -- node[sloped,above] {$\{1\} \to 15$} (s1_opt);
\draw[->, thick, opacity=0.5] (s0) -- node[sloped,above]{$\{2\} \to 8$} (s1_item2);
\draw[->, dashed, thick, opacity=0.5] (s0) -- node[sloped,above]{$\emptyset \to 0$}(s1_empty);

\node[optimal] (s2_both) at (2.6, -5.25) {
    $f=19$
    \nodepart{two} $3, 2, 9$
};
\node[green!50!black, font=\bfseries\tinysize, below=1pt of s2_both] {OPTIMAL};

\node[suboptimal] (s2_i3) at (2.6, -2.25) {
    $f=18$
    \nodepart{two} $2, 1, 8$
};
\node[black!60, font=\tinysize, below=1pt of s2_i3] {suboptimal};

\node[suboptimal] (s2_i4) at (2.6, -3.75) {
    $f=16$
    \nodepart{two} $2, 1, 7$
};
\node[black!60, font=\tinysize, below=1pt of s2_i4] {suboptimal};

\node[suboptimal] (s2_empty) at (2.6, -0.75) {
    $f=15$
    \nodepart{two} $1, 1, 6$
};
\node[black!60, font=\tinysize, below=1pt of s2_empty] {suboptimal};

\draw[->, ultra thick, green!60!black] (s1_opt) -- node[above,sloped]{$\{3,4\} \to 4$} (s2_both);
\draw[->, thick, opacity=0.5] (s1_opt) --        node[above,sloped] {$\{3\} \to 3 $} (s2_i3);
\draw[->, thick, opacity=0.5] (s1_opt) --        node[above,sloped] {$\{4\} \to 1 $}  (s2_i4);
\draw[->, dashed, thick, opacity=0.5] (s1_opt) --node[above,sloped] {$\emptyset \to 0 $} (s2_empty);

\end{tikzpicture}
    
    \caption{Visual execution of $\secondDP$ on the instance from \Cref{fig:example1}.}
    \label{fig:secondDPexample}
\end{figure}

\section{Computational experiments} 
\label{sec:computationals}

In this section, we present the results of our computational experiments, aimed at evaluating the performance of the two exact Dynamic Programming algorithms for the CKP, $\firstDP$ and $\secondDP$, introduced in Sections \ref{sec:firstDP} and \ref{sec:secondDP}, respectively. 
Our experiments pursue two main goals: first, to assess the computational effectiveness of the proposed dynamic programming approaches by comparing them against the direct solution of the $\formCKP$ formulation using state-of-the-art MIP solvers; second, to demonstrate the practical efficiency of the algorithms on instances covering a wide range of structural characteristics.

To the best of our knowledge, the CKP is a novel generalization of the Knapsack Problem that has not been previously studied in the literature (see \Cref{sec:lit}). Consequently, no publicly available benchmark instances exist. For this reason, we introduce a comprehensive benchmark of instances derived from the pricing subproblems of the Colored Bin Packing Problem (CBPP), which we describe in \Cref{sec:instances}.

\subsection{Library of benchmark CKP instances}
\label{sec:instances}

In this section, we introduce the library of benchmark instances specifically designed for the CKP. This testbed aims to provide a diverse and representative collection of instances, varying in size and structural characteristics, with the goal of comparing on such dataset the proposed methods and obtaining insights about how different instance features influence computational performance.

As discussed in \Cref{sec:lit}, the CKP arises as pricing problem of the CBPP (see \cite{BorgesSM24}). Thus, we decided to generate CKP instances by solving the root node of CBPP instances by column generation. As a result, we obtained a large set of CKP instances for each CBPP instance. All CKP instances corresponding to the same CBPP instance share the same number of items, the items sizes, the item colors, and the capacity of the knapsack. In contrast, the profits of the items differ as they stem from the dual solutions of the master problem and are used to compute the reduced costs of columns. Since $\boldsymbol \profit \in \mathbb{Z}^n$, we scaled the dual variable values and set $p_i = \lfloor 10^5\pi_i \rfloor$ for $i \in \items$ and $\pi_i$ being the corresponding dual variable value. For our computational campaign, we selected a subset of the instances proposed by \citet{BorgesSM24}. 
Specifically, we used the uniform randomly generated instances, the randomly generated instances with zipf distribution of colors, and the ANI instances with $201$ items and $2$ colors, which were taken from the BPP literature. For the specific used parameters we refer to \citet{BorgesSM24}.

Since our approaches use bounds obtained by solving a Knapsack Problem for each instance, we check if the returned KP solution is feasible for the CKP as well. Instances for which the optimal solution to the standard Knapsack Problem was already feasible for the CKP (i.e., it satisfied the color constraints) were removed from the original set because they can be easily solved using any state-of-the-art knapsack solver. The results we present in the following section are therefore just based on instances where our implementation did not find a knapsack solution that is feasible for the CKP.

\subsection{Computational performance}
\label{sec:performance}

In this section, we assess the computational performance of our proposed dynamic programming algorithms, $\firstDP$ and $\secondDP$, by comparing them against the direct solution of the $\formCKP$ formulation using two state-of-the-art MIP solvers: the commercial solver {\tt IBM ILOG CPLEX} \citep{cplex} and the free and open-source solver {\tt SCIP} \citep{SCIPOptSuite10}.
All tests were conducted on Debian 11 computing nodes equipped with two Intel Xeon L5630 processors with at least 16 GB of DDR3 RAM.
We used the \texttt{C++} interfaces of \texttt{IBM ILOG CPLEX v22.1.0} and \texttt{SCIP v10.0.0}, running in single-thread mode with default settings. Moreover, we implemented our algorithms in \texttt{C++}.

\medskip

Table~\ref{tab:uniform_summary_table} summarizes the results on the set of instances of the uniform family. The table reports the performance metrics on a large set of uniformly randomly generated instances. The instances are categorized by the number of items $n$, the capacity $\capacity$, the number of color classes $m$, and the weight generation interval $W$ ($W_1 = [0.1, 0.8]$ and $W_2 = [0.01, 0.25]$ relative to capacity).
The column ``\# inst.'' reports the number of instances in the test set without the trivial (i.e., KP-solveable) instances.
The column ``\% rem.'' indicates the percentage of of overall instances that were removed for being trivial.
Finally, the last four columns report the average solving times in milliseconds for the four approaches on these remaining instances. Bold values indicate the lowest average computational time.

The data in Table \ref{tab:uniform_summary_table} highlights several key insights: first, the ``\% rem.'' column reveals a strong inverse correlation between the number of color classes ($m$) and the restrictiveness of the color constraints. When $m$ is high (e.g., $m=15$), the probability of selecting consecutive items of the same color is lower. Consequently, for instances with $m=15$, up to $99.9\%$ of the generated instances were removed because the standard knapsack solution was already valid. Conversely, for $m=2$, the color constraint is highly restrictive, with removal rates dropping to approximately 20--35\%, which suggests that, for the majority of these instances, the color constraints are binding.

Regarding computational performance, the proposed DP algorithms significantly outperform the general-purpose solvers on the set of non-trivial uniform instances. $\firstDP$ is the fastest approach in 29 out of 30 classes, with an overall average time of \qty{14}{\milli\second}. This is orders of magnitude faster than {\tt CPLEX} (\qty{1425}{\milli\second}) and {\tt SCIP} (\qty{4294}{\milli\second}).
The difficulty of the instances for the MIP solvers appears to be driven by the weight structure ($W_1$ vs. $W_2$). Instances with $W_1$ (weights in $[0.1, 0.8]$) are harder for branch-and-bound solvers, with {\tt CPLEX} and {\tt SCIP} taking on average multiple seconds for instances with $n=1,000$. In contrast, our DP algorithms remain extremely efficient on these inputs, often solving them in under \qty{5}{ms}. 
The only case where {\tt CPLEX} is competitive with $\firstDP$ is for $n=1,000$ with $m=2$ and $W_2$ weights, where it achieves a marginally better time (\qty{62.7}{\milli\second} vs \qty{63.8}{\milli\second}).
However, $\firstDP$ remains efficient across the entire range of parameters, which proves it is the most efficient approach for solving instances of this family.

\begin{table}
\renewcommand
\arraystretch{0.65}
\centering
    \caption{Results for the uniform randomly generated instances.
    For each class of instances we give the number of items $n$, the capacity $b$, the number of colors $m$, and the interval $W$ from which the item size (as a proportion of $b$) is sampled ($W_1 = [0.1,0.8], W_2 = [0.01,0.25]$.
    We further state the number of instances after removal, what percentage of them were removed because the standard knapsack solution was feasible for the CKP, and the average running time in milliseconds for each approach.}
    
    \medskip
    
    \begin{tabular}{llllrrrrrr}
\toprule
\multicolumn{4}{l}{Instance class} & &&\multicolumn{4}{r}{Solving time (ms)}  \\[1mm]
\cmidrule{1-4} \cmidrule(l{20pt}){7-10}
$n$ & $\capacity$ & $m$ & $W$ & \# inst. & \% rem. &\hspace{1cm} ${\tt DP_1}$ & ${\tt DP_2}$ & ${\tt CPLEX}$ & ${\tt SCIP}$ \\[1mm]
\cmidrule{1-6} \cmidrule(l{20pt}){7-10}
&&\\[-1.3ex]
\multirow[t]{12}{*}{\num{300}} & \multirow[t]{6}{*}{\num{500}} & \multirow[t]{2}{*}{\num{2}} & $W_1$ & \num{5727} & \num{35.1} & \textbf{\num{1.5}} & \num{3.0} & \num{186.6} & \num{1130.4} \\
 &  &  & $W_2$ & \num{6325} & \num{20.5} & \textbf{\num{21.2}} & \num{64.9} & \num{34.0} & \num{292.7} \\[0.7ex]
 &  & \multirow[t]{2}{*}{\num{7}} & $W_1$ & \num{2289} & \num{79.1} & \textbf{\num{1.2}} & \num{4.5} & \num{1132.0} & \num{5118.7} \\
 &  &  & $W_2$ & \num{112} & \num{98.6} & \textbf{\num{1.3}} & \num{16.0} & \num{37.7} & \num{262.2} \\[0.75ex]
 &  & \multirow[t]{2}{*}{\num{15}} & $W_1$ & \num{1458} & \num{86.0} & \textbf{\num{1.2}} & \num{2.9} & \num{1545.3} & \num{7497.6} \\
 &  &  & $W_2$ & \num{14} & \num{99.8} & \textbf{\num{1.2}} & \num{26.0} & \num{27.4} & \num{187.1} \\[1.5ex]
 & \multirow[t]{6}{*}{\num{750}} & \multirow[t]{2}{*}{\num{2}} & $W_1$ & \num{6533} & \num{32.3} & \textbf{\num{2.1}} & \num{5.1} & \num{296.7} & \num{1503.1} \\
 &  &  & $W_2$ & \num{6641} & \num{19.8} & \textbf{\num{19.9}} & \num{83.0} & \num{42.5} & \num{358.9} \\[0.7ex]
 &  & \multirow[t]{2}{*}{\num{7}} & $W_1$ & \num{2175} & \num{79.3} & \textbf{\num{1.8}} & \num{5.9} & \num{1254.5} & \num{5494.6} \\
 &  &  & $W_2$ & \num{102} & \num{98.8} & \textbf{\num{2.0}} & \num{35.8} & \num{39.7} & \num{331.3} \\[0.7ex]
 &  & \multirow[t]{2}{*}{\num{15}} & $W_1$ & \num{1404} & \num{86.6} & \textbf{\num{1.8}} & \num{4.2} & \num{1493.3} & \num{6802.6} \\
 &  &  & $W_2$ & \num{8} & \num{99.9} & \textbf{\num{1.9}} & \num{19.9} & \num{25.9} & \num{135.6} \\[2ex]
\multirow[t]{18}{*}{\num{500}} & \multirow[t]{6}{*}{\num{500}} & \multirow[t]{2}{*}{\num{2}} & $W_1$ & \num{12014} & \num{33.0} & \textbf{\num{2.4}} & \num{5.5} & \num{442.0} & \num{2400.9} \\
 &  &  & $W_2$ & \num{10815} & \num{22.2} & \textbf{\num{21.2}} & \num{62.8} & \num{43.3} & \num{413.0} \\[0.7ex]
 &  & \multirow[t]{2}{*}{\num{7}} & $W_1$ & \num{4108} & \num{78.8} & \textbf{\num{2.1}} & \num{7.7} & \num{6414.8} & \num{10906.5} \\
 &  &  & $W_2$ & \num{176} & \num{98.8} & \textbf{\num{2.2}} & \num{25.8} & \num{48.7} & \num{426.2} \\[0.7ex]
 &  & \multirow[t]{2}{*}{\num{15}} & $W_1$ & \num{2425} & \num{86.6} & \textbf{\num{2.1}} & \num{6.8} & \num{9021.6} & \num{19130.9} \\
 &  &  & $W_2$ & \num{19} & \num{99.9} & \textbf{\num{2.1}} & \num{38.5} & \num{50.7} & \num{359.2} \\[1.5ex]
 & \multirow[t]{6}{*}{\num{750}} & \multirow[t]{2}{*}{\num{2}} & $W_1$ & \num{12830} & \num{33.2} & \textbf{\num{4.1}} & \num{8.8} & \num{886.2} & \num{2934.2} \\
 &  &  & $W_2$ & \num{11140} & \num{23.7} & \textbf{\num{22.8}} & \num{95.7} & \num{53.3} & \num{493.0} \\[0.7ex]
 &  & \multirow[t]{2}{*}{\num{7}} & $W_1$ & \num{4347} & \num{78.1} & \textbf{\num{3.5}} & \num{12.4} & \num{3756.6} & \num{13897.3} \\
 &  &  & $W_2$ & \num{185} & \num{98.7} & \textbf{\num{3.6}} & \num{46.5} & \num{59.5} & \num{547.7} \\[0.7ex]
 &  & \multirow[t]{2}{*}{\num{15}} & $W_1$ & \num{2593} & \num{87.3} & \textbf{\num{3.4}} & \num{10.5} & \num{6381.5} & \num{18692.0} \\
 &  &  & $W_2$ & \num{18} & \num{99.9} & \textbf{\num{3.5}} & \num{58.7} & \num{64.5} & \num{593.1} \\[1.5ex]
 & \multirow[t]{6}{*}{\num{1000}} & \multirow[t]{2}{*}{\num{2}} & $W_1$ & \num{13104} & \num{32.0} & \textbf{\num{5.4}} & \num{11.9} & \num{1113.6} & \num{3111.3} \\
 &  &  & $W_2$ & \num{12095} & \num{21.8} & \num{63.8} & \num{214.8} & \textbf{\num{62.7}} & \num{560.2} \\[0.7ex]
 &  & \multirow[t]{2}{*}{\num{7}} & $W_1$ & \num{5078} & \num{74.4} & \textbf{\num{4.9}} & \num{17.8} & \num{5536.2} & \num{15336.5} \\
 &  &  & $W_2$ & \num{162} & \num{99.0} & \textbf{\num{4.9}} & \num{69.0} & \num{65.3} & \num{567.3} \\[0.7ex]
 &  & \multirow[t]{2}{*}{\num{15}} & $W_1$ & \num{3634} & \num{81.9} & \textbf{\num{4.8}} & \num{14.5} & \num{7245.5} & \num{22216.5} \\
 &  &  & $W_2$ & \num{22} & \num{99.9} & \textbf{\num{4.8}} & \num{109.0} & \num{49.9} & \num{422.8} \\[1.5ex]
\cmidrule{1-6} \cmidrule(l{20pt}){7-10}
Tot/Avg &  &  &  & \num{127553} & \num{69.4} & \textbf{\num{14.0}} & \num{47.3} & \num{1425.8} & \num{4294.1} \\
\bottomrule
\end{tabular}

    \label{tab:uniform_summary_table}
\end{table}

\medskip

Table \ref{tab:zipf_summary_table} summarizes instead the results on the benchmark set of instances with zipf distribution of colors. Similarly to \Cref{tab:uniform_summary_table}, this table reports the instance parameters ($n$, $\capacity$), the number of  instances before filtering (``\# inst.''), the percentage of removed instances (``\% rem.''), and the average solving times for the four competing algorithms.
The results in Table~\ref{tab:zipf_summary_table} present a different landscape compared to the uniform distribution case. Here, {\tt CPLEX} consistently outperforms all other methods, achieving the lowest average solving times across all instance classes, with an overall average slightly above \qty{71}{\milli\second}.
In contrast, $\firstDP$ and $\secondDP$ require significantly more time, with an average solving time of roughly 1 second per instance.
This performance shift can be attributed to the structural properties induced by the color distribution. When items are concentrated in a few dominant color classes, the effective state space for the DP algorithms may remain large, as there are many items of the same color that can potentially be packed. On the other hand, the color constraints may become more binding in a branch-and-bound context. Since a large portion of items share the same color, the branching decisions quickly lead to infeasibility when attempting to pack many of them, allowing the solvers to prune the branching tree more aggressively.

Furthermore, the ``\% rem.'' column indicates that very few instances (only 2--7\%) were removed. This confirms that the standard knapsack solution is rarely feasible for these instances, implying that there is almost always an active color constraint. Nevertheless, it is worth noting that while slower than {\tt CPLEX}, the DP algorithms still maintain reasonable solving times (typically under 2 seconds), remaining a viable option even for this class of instances. Furthermore, its performance is still comparable to that of {\tt SCIP}, especially for smaller instances with $\capacity\leq500$: specifically, for the group of smallest instances ($n=300$ and $\capacity = 300$), $\firstDP$ and $\secondDP$ have an average solving time which is around half of that of {\tt SCIP}.

\begin{table}
\renewcommand
\arraystretch{0.65}
    \caption{Results for the randomly generated instances with zipf distribution of colors. Given are the parameters of each class of instances, how many instances they comprise (after removal), and how many instances were removed because the standard knapsack solution was feasible for the CKP. In addition, the average running times in milliseconds of each approach for each class of instances are shown. $n$ corresponds to the number of items, $b$ to the capacity.}
    \centering
    \begin{tabular}{llrrrrrr}
\toprule
\multicolumn{4}{l}{Instance class} & \multicolumn{4}{r}{Solving time (s)}  \\[1mm]
\cmidrule{1-2} \cmidrule(l{20pt}){5-8}
$n$ &$\capacity$  & \# inst. & \% rem. & \hspace{1cm}${\tt DP_1}$ & ${\tt DP_2}$ & ${\tt CPLEX}$ & ${\tt SCIP}$ \\[1mm]
\cmidrule{1-4} \cmidrule(l{20pt}){5-8}
&&\\[-2ex]
\multirow[t]{3}{*}{\num{300}} & \num{300} & \num{7012} & \num{7.0} & \num{173.7} & \num{128.2} & \textbf{\num{42.0}} & \num{313.7} \\
 & \num{500} & \num{7630} & \num{4.4} & \num{331.5} & \num{308.1} & \textbf{\num{53.0}} & \num{400.0} \\
 & \num{750} & \num{8399} & \num{3.4} & \num{719.5} & \num{885.4} & \textbf{\num{64.3}} & \num{472.4} \\[1ex]
\multirow[t]{3}{*}{\num{500}} & \num{300} & \num{12466} & \num{2.8} & \num{535.3} & \num{404.9} & \textbf{\num{62.9}} & \num{505.2} \\
 & \num{500} & \num{13227} & \num{3.5} & \num{1017.2} & \num{973.8} & \textbf{\num{80.3}} & \num{639.4} \\
 & \num{750} & \num{14585} & \num{2.1} & \num{2102.4} & \num{2523.8} & \textbf{\num{97.9}} & \num{785.4} \\[1ex]
\cmidrule{1-4} \cmidrule(l{20pt}){5-8}
Tot/Avg &  & \num{63319} & \num{3.6} & \num{956.8} & \num{1033.2} & \textbf{\num{71.3}} & \num{559.6} \\
\bottomrule
\end{tabular}

    \label{tab:zipf_summary_table}
\end{table}

\medskip

To conclude our experimental analysis on different families of instances, we also evaluated the algorithms on the ANI class, a dataset comprising \num{50513} instances after \SI{29.6}{\percent} were removed. The results confirm the trends observed for the uniformly generated instances. $\firstDP$ confirms to be the most performing algorithm, solving these instances with an average time of just \qty{11}{\milli\second}.
This performance is approximately $36$ times faster than {\tt CPLEX}, which requires an average time slightly above \qty{400}{\milli\second}, and orders of magnitude faster than {\tt SCIP} (\qty{1653}{\milli\second} on average).
$\secondDP$ remains competitive with an average time of \qty{80}{\milli\second}, though less efficient than $\firstDP$. 

Finally, Figure \ref{fig:pricing_times} illustrates the average solving time required by the four approaches ($\firstDP$, $\secondDP$, {\tt CPLEX}, and {\tt SCIP}) to solve the pricing subproblems generated during the solution of the CBPP at the root node of the branch-and-price tree. The $x$-axis represents the progress of the column generation procedure, expressed as the percentage of pricing problems solved, while the $y$-axis reports the average computational time taken by the different solution algorithms.

A clear increasing trend in difficulty is observable for all methods as the column generation progresses. This behavior is typical in branch-and-price algorithms: in the early iterations, the dual values often distribute unevenly, making it easier for solvers to find optimal columns or prune the search space. As the column generation converges, the dual variables stabilize, and the distinction between promising and non-promising items becomes less pronounced, typically requiring a more exhaustive search to certify optimality.
However, the impact of this increasing difficulty varies significantly between the approaches. The MIP solvers, particularly {\tt SCIP}, exhibit a more significant growth in solving times, which shows a greater sensitivity to the specific distribution of item profits. In contrast, the DP algorithms, particularly $\firstDP$, demonstrate a greater stability. This is consistent with the design of $\firstDP$: its computational complexity is indeed primarily determined by capacity and number of items, parameters whose value remains constant throughout the column generation process. Albeit the fathoming rules may become less effective as the item profit distribution varies, this does not significantly affect the performance of $\firstDP$, whose profile remains below that of all other solution approaches for almost the entire $x$-axis.

\begin{figure}
    \centering
    \input{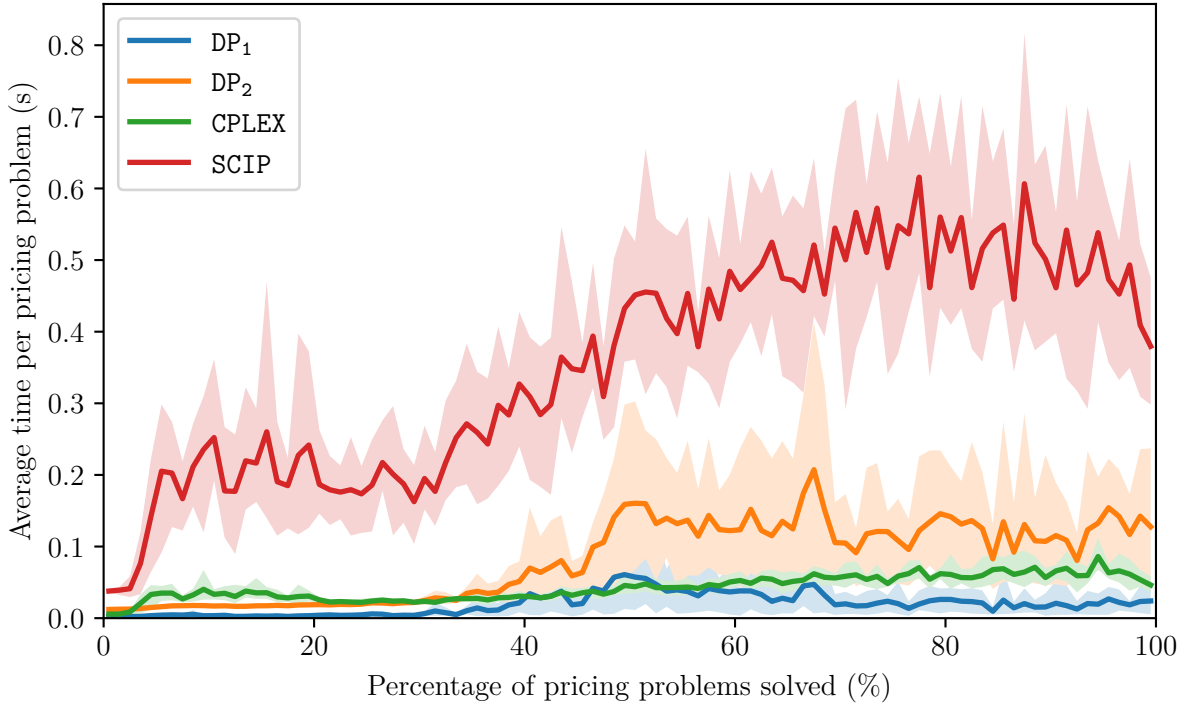}
    \caption{Evolution of the average solving time (in seconds) required by the four approaches to solve the pricing subproblems generated during the solution of the CBPP at the root node of the branch-and-price tree.} 
    \label{fig:pricing_times}
\end{figure}

\section{Conclusions}
\label{sec:conclusions}

In this paper, we introduced and analyzed the Colored Knapsack Problem (CKP), a novel generalization of the classical Knapsack Problem where items are partitioned into color classes and must be packed avoiding consecutive items of the same color. From a theoretical standpoint, we established the weak NP-hardness of the problem and analyzed its linear programming relaxation. We proved that an optimal solution to the relaxation always exists with at most two fractional items and demonstrated that it can be found in $\mathcal{O}(n)$ time, matching the complexity of the classical KP.

From the computational perspective, we instead proposed two pseudo-polynomial time dynamic programming algorithms, $\firstDP$ and $\secondDP$, with time complexities of $\mathcal{O}(bn^4)$ and $\mathcal{O}(b^{2}n^{3})$, respectively. We further enhanced these algorithms with specific dominance and fathoming rules to reduce the search space. Our computational experiments, performed on a newly introduced benchmark set of instance derived from the Colored Bin Packing Problem, showed that our dynamic programming approaches significantly outperform state-of-the-art general-purpose MIP solvers on most of the instance families.

Future research avenues naturally include the development of approximation algorithms or fully polynomial-time approximation schemes (FPTAS) for the CKP, as well as the design of combinatorial branch-and-bound algorithms which leverage the low computational effort required to find an optimal solution to the CKP linear relaxation.
A polyhedral study of the problem also appears promising, specifically to derive new valid inequalities or to extend classic cuts, such as cover cuts, to this knapsack variant.
Furthermore, given the close relationship with the Colored Bin Packing Problem, incorporating the proposed algorithms as pricing solvers within a branch-and-price framework for the CBPP constitutes a promising direction for future study.

\section*{Acknowledgments}
We would like to thank Fabio Furini for giving us the inspiration for this work, as well as Fabio Furini and Marco Lübbecke for discussions regarding the CKP and presentation of the results.

\section*{Statement on AI Usage in This Work}
We attempted to utilize commercially available LLMs like Gemini and ChatGPT to generate ideas for Dynamic Programming approaches to this problem but the proposed algorithms were either asymptotically slower than our ideas (e.g., using one resource per color to track the number of items packed in that color) or not correct. 
For solving the LP relaxation of the CKP we originally developed a cubic time algorithm.
Once we realized that the results of~\cite{Megiddo1984} in combination with the property that we can ``predict'' the dominant color in the LP may lead to a linear time algorithm, the same LLMs were used to find appropriate sources and develop the proof idea.
All text in this manuscript was written by us without any AI assistance and all citations were checked manually by us. 
We used GitHub Copilot for support in implementing our proposed algorithms and integer programming models as well as creating plots and tables. 
All AI-generated code was manually checked for accuracy by us.

\bibliographystyle{abbrvnat}
\bibliography{biblio}

\end{document}